\documentclass[a4paper,oneside,10pt,reqno]{amsart} 
\usepackage[margin=0.5in,a4paper]{geometry}

\usepackage{amsmath,amsfonts,amssymb,txfonts}
\usepackage[all]{xy}
\usepackage{hyperref,color,enumerate,graphicx}

\newtheorem{theorem}{Theorem}[section]
\newtheorem{lemma}[theorem]{Lemma}

\newtheorem{proposition}[theorem]{Proposition}
        \theoremstyle{definition}
          
           \newtheorem{definition}[theorem]{Definition}

\newcommand{\lemref}[1]{Lemma~\ref{#1}}
\newcommand{\propref}[1]{Proposition~\ref{#1}}

\newcommand{\thmref}[1]{Theorem~\ref{#1}}
\newcommand{\secref}[1]{Section~\ref{#1}}

   \newcommand\negro{\normalcolor}
\newcounter{zatia}

\usepackage{stmaryrd}

\newcommand{\TO}{\longrightarrow}

\newcommand{\wt}{\widetilde}

\newcommand{\g}{\noindent {\mathfrak{g}}}
\newcommand{\tq}{\ \big| \ }

\newcommand{\class}{\mathop{\rm class \, } \nolimits}

\newcommand{\Aut}{\mathop{\rm Aut \, } \nolimits}

\newcommand{\Ide}{\mathop{\rm Id \, } \nolimits}

\newcommand{\Ad}{\mathop{\rm Ad \, } \nolimits}

\newcommand\chii{\raise2pt\hbox{$\chi$}}
\newcommand{\rondp}{\raise2pt\hbox{\tiny $\circ$}}

\newcommand\phii{{\raise2pt\hbox{$\varphi$}}}
\newcommand\phibas{{\raise2pt\hbox{\footnotesize $\varphi$}}}
\newcommand\Om{\Omega}
\newcommand\BOm{\underline\Omega}
\newcommand\BE{\underline E}

\newcommand\BZ{\underline Z}

\newcommand\om{\omega}

\newcommand\N{\mathbb N}
\newcommand\R{\mathbb R}

\newcommand\B{\underline B}

\newcommand{\id}{{\large 1}}
\newcommand{\bi}[2]{{#1}^{^{#2}}}
\newcommand{\ib}[2]{{#1}_{_{#2}}}
\newcommand{\hiru}[3]{{#1}^{^{#2}}_{_{#3}}}
\newcommand{\lau}[4]{{#1}^{^{#2}}_{_{#3}}{\left( #4 \right)}}
\newcommand{\coho}[3]{{#1}^{^{#2}}{\left( #3 \right)}}

\title
{Spectral sequence of an isometric action}
\thanks{This work
has been partially supported by Ministerio de Ciencia e Innovaci\'on, Spain, grant PID2022-139631NB-I00. 
The authors acknowledge that the research cooperation was funded by the program Excellence Initiative Research University at the Jagiellonian University in Krakow within the framework of the research group Reeb-Reinhardt 2022.
}

\date{\today}

\author[J.I.~Royo Prieto]{Jos\'{e} Ignacio Royo Prieto}
\address{Matematika Saila\\ Zientzia eta Teknologia Fakultatea\\ University of the Basque Country UPV/EHU\\ Barrio Sarriena s/n\\ 48940 Leioa\\Spain.}
\email{joseignacio.royo@ehu.eus}

\author{Martintxo Saralegi-Aranguren}
\address{Laboratoire de Math{\'e}matiques de Lens\\  
      EA 2462 \\
      Universit\'e d'Artois\\
         SP18, rue Jean Souvraz\\
          62307 Lens Cedex\\
         France}
\email{martin.saraleguiaranguren@univ-artois.fr}

\keywords{Isometric action. Spectral sequence.}
\subjclass[2010]{Primary 57S15; Secondary 55N33.}

\begin{document}

\thispagestyle{empty}

\begin{abstract}
We consider a free smooth action $\Phi \colon G \times M \to M$ of a connected compact Lie group $G$ on a manifold $M$.
We examine the Cartan filtration of the complex of differential forms of $M$. The associated spectral sequence $\hiru{E}{p,q}{r}$ converges to the cohomology of $M$. It is well known that the second page $\hiru{E}{p,q}{2}$  of this spectral sequence is given by $\coho H p {M/G} \otimes \coho H q \g$, where $\g$ denotes the Lie algebra of $G$.

In this note, we provide a straightforward proof of this fact without using Mayer-Vietoris, harmonic operators, or other such methods found in existing proofs. In fact, we extend this result to the case where the action is locally free and  $G$ is not compact, under the hypothesis that $\Phi$ extends to a smooth action of a compact Lie group $K$. The compactness of $K$ is a crucial aspect of our proof.

When $G$ is not compact, the cohomology $\coho H p {M/G} $ is not the cohomology of the orbit space $M/G$, which may be a topologically wild space, but rather the basic cohomology of the foliation determined by the action of $G$.
\end{abstract}

\maketitle

\section{Introduction}\label{sec1}

We consider in this note a locally free smooth action $\Phi \colon G \times M \to M$ of a  connected Lie group $G$ on a manifold $M$. 
We also assume the existence of a smooth  extension of $\Phi$ to the action
$\Psi \colon K \times M \to M$ of a compact Lie group $K$.
The Cartan filtration classifies the differential forms of $M$ based on the number of vector fields tangent to the orbits of $\Phi$ required to annihilate them. This filtration defines a spectral sequence, the \emph{Leray-Serre-deRham spectral sequence}  $\hiru{E}{p,q}{r}$, which converges to the cohomology of $M$. The main result of this note is the isomorphism
\begin{equation}\label{ee}
\hiru{E}{p,q}{2} \cong \coho Hp{M/G} \otimes \coho H q \g,
\end{equation}
where $ \coho Hp{M/G} $ denotes the basic cohomology of the foliation determined by the orbits of the action $\Phi$, and $\g$ is the Lie algebra of $G$.

This result has already been proven by other authors \cite{MR658304,MR974012,razny} using different techniques such as Mayer-Vietoris on $M/G$, harmonic analysis on $M$, etc.
The compactness of $K$ is the keystone of our approach.

First of all, we prove that, thanks to the compactness of 
$K$, we can restrict ourselves to using invariant differential forms to compute $\hiru{E}{p,q}{2}$ (see \propref{61P}).
In other words, we begin with
$$
 \hiru{\BE}{p,q}{0} 
 \cong
 \left(\lau \Om p{hor} M \otimes \bi \bigwedge q(\chii_1, \ldots,\chii_n)\right)^G.
 $$
We then carry out an explicit computation of the second term of the spectral sequence by simply using the classical decomposition of the differential
$d = d_{2,-1} + d_{1,0}+d_{0,1}$ associated to a Riemannian  metric of $M$. A priori, the operator $d_{0,1}$ mixes the tangent bundle of the action with its orthogonal bundle
(see \eqref{01}). 
Once again, the compactness of $K$ allows us to construct a rich Riemannian  metric, which we call a good metric (see \propref{goodmetric}), for which $d_{0,1}$ lives in the tangent bundle of the action (see \propref{47}). 
This gives 
$$
 \hiru{\BE}{p,q}{1} 
 \cong
\left(
 \lau \Om p {hor} M\otimes \coho H q \g
  \right)^G 
=
   \coho {\Om} p {M/G} \otimes \coho H q {\g} $$
   (cf. \eqref{kkbb}).
 The computation of $\hiru{E}{p,q}{2}$  is thus achieved  and we get \eqref{ee} (see \propref{61P} and \thmref{Th}).

\section{The group $G$}\label{222}
\begin{quote}\emph{We summarize here all the tools coming from a Lie group $G$, connected subgroup of a compact Lie group $K$, that we will use in this work, including the fundamental vector fields and the associated characteristic forms.
}
\end{quote}

 Let $G$ be a connected subgroup of a compact Lie group $K$.
We consider a bi-invariant Riemannian  metric $\nu$ on $G$, which exists because $K$ is compact. The Lie algebra of $G$ is denoted by $\g$.
The associated \emph{fundamental vector field}, relative to the right action of $G$ on itself\footnote{We could have interchanged left with right with the same result.}, for a given $u\in \g$, is denoted by $Y_u$. 
These vector fields verify:
\begin{align*}
(L_g)_* Y_u =& Y_u ,
\hbox{ and}\\
 (R_g)_*Y_u =& Y_{\Ad(g^{-1})\cdot u}
\end{align*}
for any $u\in \g$ and $ g\in G$. Here 
\begin{itemize}
\item $L_g \colon G \to G$ is defined by $L_g(\ell) = g\cdot \ell$,
\item $R_g \colon G \to G$ is defined by $R_g(\ell) = \ell \cdot g^{-1}$, and
\item 
 $\Ad\colon G \to \Aut(\g)$ is the adjoint representation of $G$.
 \end{itemize}
 Notice that the vector fields $Y_\bullet$ are $G$-left invariant vector fields.

Given $u\in \g$ we write $\gamma_u =  i_{Y_u} \nu \in  \coho \Om 1 G$ the associated \emph{characteristic form} associated with $u$.
They verify 
\begin{align}\label{Rdelta}
L_g^*\gamma_u =& \gamma_u, \hbox{ and }\\ \nonumber
R_g^*\gamma_u =& \gamma_{\Ad(g^{-1})\cdot u}
\end{align}
for any $u\in \g$ and $ g\in G$.
 Notice that these differential forms $\gamma_\bullet$  are $G$-left invariant.

  Identifying the Lie algebra $\g$ with  the Lie algebra generated by the family $\{ X_1 \ldots ,X_n\}$, the complex  $ \bigwedge^* \g^*$ is in fact isomorphic to the complex of left-invariant differential forms of $G$. 
We have the isomorphism
\begin{equation}\label{gg}
 \coho H * {\g} \cong \left( \bigwedge^* \g^*\right)^G
\end{equation}
where $( \bigwedge^* \g^*)^G$  is the complex of G-invariant elements,
where we have considered the right action of G (see, for example, \cite[Chap. 4, Sec.2]{MR0336651} ). Notice that
$\left( \bigwedge^* \g^*\right)^G$ is then identified (and is isomorphic as a dgca)
 with the complex of bi-invariant diferential forms on G.

\negro
\smallskip

Consider the metric $\langle -,-\rangle$ defined on the Lie algebra $\g$ by $ \langle u,v\rangle =\nu (Y_u,Y_v)$. 
We fix for the sequel an orthonormal basis $\{u_1, \ldots, u_n\}$ for  $\g$.  For simplicity, we denote
$
Y_{u_i} = Y_i,
$
and 
 $
 \gamma_{u_i} = \gamma_i,
 $
for each $i\in \{1, \ldots n\}$. Notice that $\gamma_i (Y_j)= \delta_{i,j}$ for any $i,j \in \{1, \ldots n\}$.
 This metric $\langle -,-\rangle$ is $\Ad$-invariant since for each $g\in G$ and each $u,v\in \g$, we have:
\begin{align}\label{InvAd}
\langle \Ad(g^{-1}) \cdot u, \Ad(g^{-1})\cdot v\rangle = \nu (Y_{\Ad(g^{-1}) \cdot u}, Y_{\Ad(g^{-1})\cdot v}\rangle 
   =
 \nu ((R_{g^{-1}})_*Y_{ u}, (R_{g^{-1}})_* Y_{ v}\rangle
  = \nu( Y_{ u},Y_{ v} )  =  \langle { u},{ v} \rangle ,
\end{align}
since $\nu$ is bi-invariant.

 \begin{proposition}\label{21delta}
We have
 \begin{equation}\label{dedbis}
 d \gamma_u = -
 \sum_{1\leq a <b\leq n}  \langle [u_a,u_b],u\rangle\ \gamma_{u_a} \wedge \gamma_{u_b} 
 \end{equation}
 for each $u \in \g$.
 \end{proposition}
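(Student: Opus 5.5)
Since both sides of \eqref{dedbis} are left-invariant $2$-forms on $G$ (by \eqref{Rdelta}, $L_g^*$ commutes with $d$ and fixes each $\gamma_\bullet$), and $\{Y_1,\ldots,Y_n\}$ is a global frame of left-invariant vector fields, it suffices to evaluate both sides on pairs $(Y_a,Y_b)$ with $a<b$. The plan is to use the standard Cartan formula
$$
d\gamma_u(Y_a,Y_b) = Y_a\big(\gamma_u(Y_b)\big) - Y_b\big(\gamma_u(Y_a)\big) - \gamma_u([Y_a,Y_b]).
$$
First, $\gamma_u(Y_b) = \nu(Y_u,Y_b) = \langle u,u_b\rangle$ is constant on $G$: both fields are left-invariant and $\nu$ is left-invariant. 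Hence the first two terms vanish and $d\gamma_u(Y_a,Y_b) = -\nu(Y_u,[Y_a,Y_b])$.

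Next we identify $[Y_a,Y_b]$. The $Y_u$ are the fundamental vector fields of the right-multiplication action, so they are left-invariant fields, and $u\mapsto Y_u$ is a Lie algebra homomorphism (or anti-homomorphism, depending on the convention fixing $\g$ and the sign in $R_g(\ell)=\ell g^{-1}$). This is the main point to check carefully. With $R_g(\ell)=\ell g^{-1}$, the infinitesimal generator is $Y_u(\ell) = \frac{d}{dt}\big|_{t=0}\ell\exp(-tu)$, i.e.\ $Y_u$ is minus the usual left-invariant field; the bracket relation then becomes $[Y_a,Y_b] = -Y_{[u_a,u_b]}$ or $+Y_{[u_a,u_b]}$ according to how $\g$'s bracket is normalized. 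I would fix the convention so that $[Y_u,Y_v]=Y_{[u,v]}$, which is the standard statement that fundamental fields of a right action form a Lie algebra homomorphism and the one matching the sign in \eqref{dedbis}. Then $d\gamma_u(Y_a,Y_b) = -\nu(Y_u, Y_{[u_a,u_b]}) = -\langle [u_a,u_b],u\rangle$.

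Finally, on the right-hand side, since $\gamma_i(Y_j)=\delta_{ij}$ (orthonormality of the basis), $(\gamma_{u_c}\wedge\gamma_{u_d})(Y_a,Y_b) = \delta_{ca}\delta_{db}-\delta_{cb}\delta_{da}$. Hence for $a<b$ only the term $(c,d)=(a,b)$ survives in the sum over $c<d$, giving $-\langle [u_a,u_b],u\rangle$. The two sides agree on every frame pair, which proves \eqref{dedbis}. The only delicate step is the sign bookkeeping for $[Y_a,Y_b]$. As a sanity check, linearity in $u$ and bi-invariance (via \eqref{InvAd}, which gives $\langle[w,v],u\rangle=-\langle v,[w,u]\rangle$) confirm that the resulting $2$-form is well defined and Ad-equivariant, consistent with \eqref{Rdelta}.
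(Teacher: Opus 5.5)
Your argument is essentially the paper's: it also evaluates $d\gamma_u$ on the pairs $(Y_a,Y_b)$ (written there as $i_{Y_b}i_{Y_a}d\gamma_u$ and expanded via $L_{Y_a}=d\,i_{Y_a}+i_{Y_a}d$ and $[L_X,i_Y]=i_{[X,Y]}$), drops the terms involving the constants $\gamma_u(Y_b)$, and reads the coefficient off the bracket term. Your convention $[Y_u,Y_v]=Y_{[u,v]}$ is not a free choice---with the literal $R_g(\ell)=\ell g^{-1}$, $Y_u$ is minus the usual left-invariant field, so $[Y_u,Y_v]=-Y_{[u,v]}$, and your correct formula $d\gamma_u(Y_a,Y_b)=-\gamma_u([Y_a,Y_b])$ then gives the opposite sign---and it is also the net convention of the paper, whose proof writes $-i_{[Y_b,Y_a]}\gamma_u$ where $[L_X,i_Y]=i_{[X,Y]}$ gives $-i_{[Y_a,Y_b]}\gamma_u$ and then uses $[Y_b,Y_a]=-Y_{[u_b,u_a]}$, the two sign changes cancelling.
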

 \begin{proof}
We have the expression
$\displaystyle
d \gamma_u = \sum_{1\leq a <b\leq n} \alpha_{a,b} \ \gamma_{u_a} \wedge \gamma_{u_b}
$
with  $ \alpha_{a,b} \in \R$.
Let's calculate these coefficients.
\begin{align*}
 \alpha_{a,b} &=  i_{Y_{u_b}}i_{Y_{u_a}} d\gamma_u  =  i_{Y_{u_b}}L_{Y_{u_a}}  \gamma_u -  
 i_{Y_{u_b}} \underbrace{d  \underbrace{i_{Y_{u_a}} \gamma_u }_{constant}}_{=0}
 =  i_{Y_{u_b}}L_{Y_{u_a}}  \gamma_u=  \underbrace{L_{Y_{u_a}}  \underbrace{ i_{Y_{u_b}}   \gamma_u}_{constant}}_{=0} -   i_{[Y_{u_b}, Y_{u_a}]}  \gamma_u
 \\ =& -\nu([Y_{u_b},Y_{u_a}],Y_u) = \nu(Y_{[u_b,{u_a}]},Y_u)
  =  \langle [u_b,{u_a}],u\rangle = -  \langle [{u_a},u_b],u\rangle.  \qedhere
\end{align*}    
 \end{proof}

\section {Tame actions}\label{tame}
\begin{quote}\emph{
We introduce the specific class of actions  we are interested in. These actions are obtained by restricting smooth actions of compact Lie groups on manifolds. The goal of this section is to present a canonical decomposition of a differential form on the ambient manifold $M$ in terms of its characteristic forms and horizontal forms.
}

\end{quote}
\begin{definition}
A smooth action $\Phi \colon G \times M\to M$ of a connected Lie group on a manifold is called a \emph{tame action} if the following condition is satisfied:

\begin{itemize}

\item[\textsf{(T)}] There exists a smooth action $\Psi \colon K \times M\to M$, where $K\supset G$ is a compact Lie group, extending the action of $G$. 
\end{itemize}
\end{definition}

Notice that a smooth action $\Phi \colon G \times M\to M$ of a connected Lie group on a manifold $M$ is tame  if either $G$ is compact or $M$ is compact and the action is by isometries (see \cite[Theorem 1.2 of Chap. II]{MR1336823}).
The group $K$ of the definition is not unique, but it may be chosen with richer properties.

\begin{definition}
Let  $\Phi \colon G \times M\to M$ be a tame action. An action $\Psi$ satisfying  condition \textsf{(T)} is a \emph{tamer} of $\Phi$  if $G$ is dense in $K$.
\end{definition}

Any tame action
admits a tamer. Note that the Lie group $K$ is connected because $G$ is connected.
Furthermore,  $G$ is necessarily a normal subgroup of $K$ (see \cite[Proposition 1.4.1]{MR44530}).

The \emph{fundamental vector field}  (on $M$) associated to $u\in \g$ will be  denoted by $X_u$.   
These vector fields are preserved by the action of $K$ in the following way:
\begin{equation}\label{Xg}
g_*X_u = X_{\Ad(g)\cdot u} 
\end{equation}
for each $g\in K$. 
   This makes sense since $\Ad(g)\cdot u \in \g$ because $G$ is normal in $K$. \negro
For the sake of  simplicity, we write $X_i = X_{u_i}$, where $\{u_1, \ldots,u_n\}$ is the orthogonal basis of $\g$ we have chosen in the previous section.

Given a Riemannian  metric $\mu$ on $M$ we have, for any $u \in \g$, the \emph{characteristic form } $\chi_u = i_{X_u} \mu\in \coho \Om1 M$. 
We write $\chi_i = \chi_{u_i}$ for $i\in \{1, \ldots,n \}$. If the metric $\mu$ is $G$-invariant then
 \begin{equation}\label{chichi}
g^*\chii_u = \chii_{\Ad(g^{-1})\cdot u}
\end{equation}
for each $g\in G$ and $u\in \g$. 
   This makes sense since $\Ad(g)\cdot u \in \g$ because $G$ is normal in $K$. \negro
 In some cases, a careful choice of $\mu$ provides for well-behaved characteristic forms.

\begin{proposition}\label{goodmetric}
Given a locally free tame action $\Phi \colon G \times M \to M$ there exists a $G$-invariant Riemannian metric on $M$ such that
\begin{equation}\label{Xinv}
\chii_{u} (X_v) =  \langle u,v\rangle,
\end{equation}
for each $u,v\in \g$. Such a metric will be called a  \emph{ good metric}.
\end{proposition}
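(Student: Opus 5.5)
Since the action is locally free, the fundamental vector fields $X_1,\dots,X_n$ are pointwise linearly independent. They therefore span a rank $n$ subbundle $T\mathcal F\subset TM$, the tangent bundle of the orbit foliation. The plan is to build a metric in three steps. First, choose any $K$-invariant metric $\mu_0$ on $M$, obtained by averaging an arbitrary metric over the compact group $K$ with its Haar measure. Then keep $\mu_0$ on the orthogonal complement $H = (T\mathcal F)^{\perp_{\mu_0}}$. Finally, replace $\mu_0$ on $T\mathcal F$ by the metric $\mu_{\mathcal F}$ determined by declaring
$$\mu_{\mathcal F}(X_u,X_v)=\langle u,v\rangle \quad (u,v\in\g),$$
so that $\{X_1,\ldots,X_n\}$ is an orthonormal frame of $T\mathcal F$. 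We set $\mu=\mu_{\mathcal F}\oplus\mu_0|_H$, declaring $T\mathcal F\perp H$.

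The first thing to check is that $\mu$ is a smooth Riemannian metric. Smoothness holds because $X_1,\dots,X_n$ is a smooth frame of $T\mathcal F$. The bundle $H$ is smooth as an orthogonal complement, and the decomposition $TM=T\mathcal F\oplus H$ is smooth. Positivity is clear on each summand. The property \eqref{Xinv} then holds directly. Since $X_v\in T\mathcal F$ and $H\perp T\mathcal F$, we get $\chii_u(X_v)=\mu(X_u,X_v)=\mu_{\mathcal F}(X_u,X_v)=\langle u,v\rangle$.

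The main point is the $G$-invariance (in fact we would get $K$-invariance). Take $g\in K$. By \eqref{Xg}, $g_*X_u=X_{\Ad(g)u}$, and $\Ad(g)u\in\g$ because $G$ is normal in $K$. So $g_*$ preserves $T\mathcal F$. Since $g$ is a $\mu_0$-isometry, $g_*$ also preserves $H$. It therefore suffices to show that $g_*$ preserves $\mu_{\mathcal F}$, that is,
$$\langle \Ad(g)u,\Ad(g)v\rangle=\langle u,v\rangle .$$
For $g\in G$ this is exactly \eqref{InvAd}, which is enough for the statement.

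If one also wants $K$-invariance, one can argue by density. Since $G$ is dense in $K$ when $\Psi$ is a tamer, and $\Ad$ is continuous, the $\Ad(G)$-invariance of $\langle-,-\rangle$ extends to $\Ad(K)$. Alternatively, one can start from an $\Ad(K)$-invariant inner product on $\g$, which is available because $K$ is compact. This invariance step is the only place where care is needed. The key subtlety is that $\Ad(g)$ preserves $\g$, which uses the normality of $G$ in $K$, and that the inner product $\langle-,-\rangle$ coming from the bi-invariant $\nu$ is $\Ad$-invariant. Everything else is routine linear algebra done fibrewise.
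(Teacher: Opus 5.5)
Your proof is correct and follows the paper's argument. You take a $K$-invariant (hence $G$-invariant) metric, keep it on the orthogonal complement of the orbit distribution, and declare $X_1,\ldots,X_n$ orthonormal along the orbits; $G$-invariance then follows from \eqref{Xg} and the $\Ad$-invariance \eqref{InvAd}, and your extra checks (smoothness, $g_*$ preserving $H$, and $K$-invariance by density when $\Psi$ is a tamer) correctly fill in details the paper leaves implicit.
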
 
\begin{proof}
 Let $\mathcal{D}$ the distribution generated by $\Phi$.
 Since the action  $\Phi$  is locally free free  on $M$, for each $x\in M$, the  family $\{ X_i(x) \tq i\in \{1, \ldots n\}\}$  forms a basis of $\mathcal{D}_x$. This distribution is $G$-invariant (cf. \eqref{Xg}).

The next step is to construct a convenient Riemannian  metric on the manifold $M$. Since the Lie group $K$ is compact, then there exists a $G$-invariant Riemannian  metric $\tau$ on $M$.
Consider  now the $\tau$-orthogonal
$G$-invariant decomposition
$
T M=  \mathcal{D} \oplus \mathcal{E}.
$
We define the $G$-invariant Riemannian  metric $\mu$ on $ M$ by 
$$
\mu (w_1, w_2) =
\left\{
\begin{array}{cl}
\tau(w_1,w_2) & \hbox{if } w_1,w_2 \in  \mathcal{E}_x \\
0 & \hbox{if } w_1 \in  \mathcal{E}_x, w_2 \in\mathcal{D}_x \\
\delta_{i,j}& \hbox{if } w_1 = X_i(x), w_2 = X_j(x).
\end{array}
\right.
$$
 This metric is $G$-invariant since $\tau$ is $G$-invariant and due to  property \eqref{InvAd}.
Notice that
$
\chii_{i} (X_j) = \mu (X_i,X_j)=  \delta_{i,j},
$
for each $i,j\in\{1,\ldots,n\}$, which gives \eqref{Xinv}.
\end{proof}

\bigskip

{\bf For the remainder of this work, we  fix  a locally free tame action $\Phi \colon G \times M \to M$ endowed with a tamer 
$\Psi \colon K \times M \to M$. We also fix an associated  good metric $\mu$ on $M$.}

\bigskip

Recall that a form $\omega \in \coho \Om * M$  is a {\em horizontal form} if $i_{X_j} \omega =0$ for each $j\in \{1,\ldots,n\}$.

\begin{proposition}\label{P1}
Each differential form $\om \in
\lau{\Om}{*}{}{ M }$ possesses a unique writing,
\begin{equation}\label{cano}
\om = \omega_{hor} + \sum_{q=1}^n \ \sum_{1\leq i_1 < \cdots < i_q\leq n } \om_{i_1, \ldots,i_q}  \wedge \chii_{i_1} \wedge \cdots \wedge \chii_{i_q},
\end{equation}
where $\omega_{hor}$ and the coefficients $  {\omega_\bullet } $ are   horizontal forms.

The {\em canonical decomposition} of $\om$ induces the isomorphism of dgcas
\begin{equation}\label{iden}
\left(\coho \Om * M,d\right)  \cong \left(\lau \Om *{hor} M \otimes \bi \bigwedge *(\chii_1, \ldots,\chii_n),d\right),
\end{equation}
where  $\lau \Om *{hor}  M$  is the graded commutative algebra of horizontal forms on $M$.
\end{proposition}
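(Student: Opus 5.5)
The plan is to build the decomposition pointwise from linear algebra and then check smoothness and the algebra structure. By \propref{goodmetric}, at each $x\in M$ the vectors $X_1(x),\ldots,X_n(x)$ form a basis of $\mathcal{D}_x$ and satisfy $\chii_i(X_j)=\delta_{i,j}$. Let $\mathcal{E}$ be the $\mu$-orthogonal complement of $\mathcal{D}$. Since $\chii_i=i_{X_i}\mu$, each $\chii_i$ vanishes on $\mathcal{E}$, so $\{\chii_1,\ldots,\chii_n\}$ is the basis of $\mathcal{D}_x^*$ dual to $\{X_i(x)\}$, extended by zero on $\mathcal{E}_x$. Under the splitting $T_xM=\mathcal{D}_x\oplus\mathcal{E}_x$, horizontal covectors are exactly those vanishing on $\mathcal{D}_x$, i.e. $\mathcal{E}_x^*$. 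The standard identification
\[
\textstyle\bigwedge^* T_x^*M\cong \bigwedge^*\mathcal{E}_x^*\otimes\bigwedge^*\mathcal{D}_x^*
\]
then gives pointwise existence and uniqueness of \eqref{cano}.

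To obtain the coefficients explicitly (which gives smoothness and uniqueness directly), I will use contractions. For $I=\{i_1<\cdots<i_q\}$, I expect
\[
\om_{i_1,\ldots,i_q}=\pm\, \Big(\prod_{j\notin I}(1-\chii_j\wedge i_{X_j})\Big)\, i_{X_{i_q}}\cdots i_{X_{i_1}}\om ,
\]
or, more simply, proceed by induction on $n$. Write $\om=\alpha+\beta\wedge\chii_n$ with $\beta=(-1)^{|\om|-1}i_{X_n}\om$ and $\alpha=\om-\beta\wedge\chii_n$. One checks that $i_{X_n}\alpha=0=i_{X_n}\beta$, using $i_{X_n}\chii_n=1$ and $i_{X_n}i_{X_n}=0$. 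Then apply the same step to $\alpha$ and $\beta$ with $X_{n-1}$. This uses $i_{X_j}\chii_n=0$ for $j\neq n$, so contraction with $X_{n-1}$ does not destroy the earlier horizontality. Each step is built from contractions and wedges with smooth forms, so all coefficients are smooth. Uniqueness follows by applying $i_{X_{i_q}}\cdots i_{X_{i_1}}$ to a vanishing decomposition and using $\chii_i(X_j)=\delta_{i,j}$, starting from the longest multi-indices.

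The map in \eqref{iden} sends $\eta\otimes\chii_{i_1}\wedge\cdots\wedge\chii_{i_q}\mapsto \eta\wedge\chii_{i_1}\wedge\cdots\wedge\chii_{i_q}$. It is $C^\infty$-linear and multiplicative because horizontal forms are closed under wedge product. It is bijective by the existence and uniqueness just shown.

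The main point needing care is the meaning of ``isomorphism of dgcas''. The differential on the right-hand side is defined by transport of structure from $d$ on $\coho\Om*M$, and $\lau\Om*{hor}M$ is not itself $d$-stable. So \eqref{iden} is an isomorphism of graded commutative algebras, and the differential on the right is declared to be the one corresponding to $d$. I will state it this way, noting that the tensor product is over $C^\infty(M)$.
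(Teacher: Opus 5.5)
Your proof is correct, but you organize the induction differently from the paper.

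The paper inducts on the filtration degree $\llbracket \omega \rrbracket = q$. By maximality of $q$, the contractions $i_{X_{i_q}}\cdots i_{X_{i_1}}\omega$ are automatically horizontal. Subtracting $\sum_{I}(-1)^{pq}\, i_{X_{i_q}}\cdots i_{X_{i_1}}\omega\wedge\chi_{i_1}\wedge\cdots\wedge\chi_{i_q}$ then strictly lowers $\llbracket\cdot\rrbracket$, so the whole top vertical component is removed in one step. Uniqueness and the algebra statement are left as ``straightforward''.

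You instead peel off one direction at a time with the operators $1-\chi_j\wedge i_{X_j}$. These satisfy $i_{X_j}(1-\chi_j\wedge i_{X_j})=0$, and they commute with $i_{X_k}$ for $k\neq j$ precisely because $\chi_j(X_k)=0$. Both arguments rest on the same single input, the good-metric identity $\chi_i(X_j)=\delta_{i,j}$.

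Your version gives a closed formula for all coefficients at once, $\omega_I=\pm\prod_{j\notin I}(1-\chi_j\wedge i_{X_j})\, i_{X_{i_q}}\cdots i_{X_{i_1}}\omega$. It also gives an explicit descending-induction proof of uniqueness and the pointwise picture $\bigwedge T^*_xM\cong\bigwedge\mathcal{E}^*_x\otimes\bigwedge\mathcal{D}^*_x$. The paper's version is shorter and is phrased directly in terms of $\llbracket\cdot\rrbracket$, which it reuses to describe $F^p\Omega^{p+q}(M)$ and the bigrading $\Omega^{p,q}(M)$.

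Your reading of the dgca isomorphism in the statement is the right one: it is a gca isomorphism with the differential transported from $d$. The paper itself later notes that $\Omega^*_{hor}(M)$ is not $d$-stable.

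One small caution concerns the tensor product. Later the paper treats $\bigwedge^*(\chi_1,\ldots,\chi_n)$ as the finite-dimensional real exterior algebra, since it takes $G$-invariants of it and identifies them with $H^*(\mathfrak g)$. So the tensor product there is over $\mathbb{R}$. This gives the same space as your $C^\infty(M)$ reading, because the $\chi_I$ form a free $C^\infty(M)$-basis, but the real version is the one needed downstream.
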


\begin{proof}
Let $q\geq 0$ be the integer verifying 
\begin{itemize}
\item $\omega (X_{i_0}, \ldots ,X_{i_q}) =0$ for any $\{i_0, \ldots, i_q\} \subset \{1 \ldots,n\}$, and
\item $\omega (X_{i_1}, \ldots ,X_{i_q}) \ne 0$ for some $\{i_1, \ldots, i_q\} \subset \{1 \ldots,n\}$.
\end{itemize}
We say that $q$ is the {\em filtration degree} of $\omega$, written $\llbracket \omega \rrbracket$.
When $\llbracket \omega \rrbracket=0$ then $\omega$ is a horizontal form and its canonical decomposition is the tautology $\omega = \omega_{hor}$.

Let us suppose $\llbracket \omega\rrbracket >0$. Consider the differential form
$$
\eta = \om -  \sum_{1\leq i_1 < \cdots < i_q\leq n } (-1)^{pq} \  i_{X_{i_q}} \cdots i_{X_{i_1}} \om  \wedge \chii_{i_1} \wedge \cdots \wedge \chii_{i_q}.
$$
Since $\llbracket \eta\rrbracket  < \llbracket \omega\rrbracket $, it suffices to apply induction  on the filtration degree to obtain \eqref{cano}. The decomposition \eqref{iden} is straightforward.
\end{proof}

\begin{proposition}\label{P2}
The canonical decomposition of  the differential of a horizontal form $\omega$ is:
 \begin{equation}\label{horib}
 d \omega = (d\omega)_{hor} + (-1)^{|\omega|}  \sum_{i =1}^n  L_{X_i} \omega \wedge\chii_i.
 \end{equation}
 \end{proposition}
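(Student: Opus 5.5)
We argue directly from the uniqueness part of \propref{P1}. The plan is to identify the coefficients of the canonical decomposition of $d\omega$ by contracting with the fundamental vector fields $X_j$. Two facts do the work: the good metric gives $\chii_i(X_j)=\delta_{i,j}$ by \eqref{Xinv}, and all coefficients in \eqref{cano} are horizontal.

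\emph{Step 1: $d\omega$ has filtration degree at most $1$.} By Cartan's formula and the horizontality of $\omega$,
$$i_{X_j} d\omega = L_{X_j}\omega - d\, i_{X_j}\omega = L_{X_j}\omega .$$
Contracting once more and using $[L_{X_j}, i_{X_k}] = i_{[X_j,X_k]}$, we get
$$i_{X_k} L_{X_j}\omega = L_{X_j} i_{X_k}\omega - i_{[X_j,X_k]}\omega = - i_{[X_j,X_k]}\omega .$$
Fundamental vector fields of an action satisfy $[X_u,X_v]=\pm X_{[u,v]}$, and $X_{[u,v]}$ is a constant-coefficient combination of $X_1,\ldots,X_n$. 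Horizontality of $\omega$ therefore gives $i_{X_k} i_{X_j} d\omega = 0$ for all $j,k$. This has two consequences:
\begin{itemize}
\item each $L_{X_j}\omega$ is itself horizontal;
\item $\llbracket d\omega\rrbracket \le 1$.
\end{itemize}
Since $\llbracket d\omega\rrbracket \le 1$, the canonical decomposition has no terms involving products of two or more $\chii$'s. It therefore reduces to
$$d\omega = (d\omega)_{hor} + \sum_{i=1}^n a_i \wedge \chii_i ,$$
with $a_i$ horizontal of degree $|\omega|$. To see this concretely, the form $\eta$ built in the proof of \propref{P1} involves only single $\chii_i$'s when the filtration degree is $1$.

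\emph{Step 2: identify the $a_i$.} Contract this expression with $X_j$. The horizontal pieces $(d\omega)_{hor}$ and $a_i$ are killed by $i_{X_j}$. Since $i_{X_j}$ is an antiderivation and $\chii_i(X_j)=\delta_{i,j}$, we obtain
$$i_{X_j}(a_i\wedge\chii_i)=(-1)^{|\omega|}\delta_{i,j}\, a_i .$$
Comparing with Step 1 gives $(-1)^{|\omega|} a_j = L_{X_j}\omega$, that is, $a_j = (-1)^{|\omega|} L_{X_j}\omega$. This is exactly \eqref{horib}. Uniqueness in \propref{P1} then guarantees that $(d\omega)_{hor}$ is the horizontal component of $d\omega$.

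The main point requiring care is the vanishing of the double contraction, which is what rules out higher $\chii$-terms. It hinges on the bracket of fundamental vector fields being a constant-coefficient combination of the $X_i$, so that horizontality of $\omega$ kills $i_{[X_j,X_k]}\omega$. The only other place needing attention is the sign $(-1)^{|\omega|}$ coming from moving $i_{X_j}$ past $a_i$.
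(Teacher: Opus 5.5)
Your proof is correct and follows the paper's argument. Like the paper, you write $d\omega = (d\omega)_{hor} + \sum_i a_i\wedge\chii_i$ and identify each $a_\ell$ by contracting with $X_\ell$, using $i_{X_\ell}d\omega = L_{X_\ell}\omega$ and $\chii_i(X_\ell)=\delta_{i,\ell}$. Your Step 1, which shows $i_{X_k}i_{X_j}d\omega=0$ and so rules out terms with two or more $\chii$'s, fills in a point the paper's proof takes for granted (it amounts to $d$ preserving the Cartan filtration).
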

  \begin{proof}
The canonical decomposition of $d\om$ is $\displaystyle d \omega = (d\omega)_{hor} + \sum_{i =1}^n (d \omega)_i  \wedge \chii_i$ (cf. \eqref{cano}). This implies
$$
 (-1)^{|\om|} (d \omega)_\ell =  i_{X_\ell} d\om = L_{X_\ell} \om,
$$
since $\om$ is a horizontal form, for each $\ell \in \{1, \ldots,n\}$. This gives the result.
\end{proof}

In this work, we use  the identification
$\eta \otimes \chii_{i_1}\wedge  \ldots \wedge  \chii_{i_q}  =\eta \wedge \chii_{i_1}\wedge  \ldots \wedge  \chii_{i_q} $ 
for any horizontal form $\eta$ and each $1\leqq i_1 < \cdots < i_q\leq n$.

We end this section by computing the differential of characteristic forms, 
a calculation that will be useful for the remainder of this work.

The complex $\bigwedge^* (\chii_{u_1}, \ldots, \chii_{u_n})$ is endowed with the differential $\delta$  defined from
  \begin{equation}\label{ded}
 \delta \chii_u =
 \sum_{1\leq a <b\leq n} \langle [u_a,u_b],u\rangle  \  \chii_{u_a} \wedge \chii_{u_b}
 \end{equation}
for each $u\in \g$  (see \eqref{dedbis}).

\begin{proposition}\label{ddelta}
For each  $i\in \{1,\ldots,n\}$,  we have a horizontal  form $e_i \in \coho  \Om 2  M$, called \emph{Euler form},  verifying
 \begin{equation}\label{bat}
d\chii_i =
e_i  + 
\delta \chii_i.
\end{equation}
\end{proposition}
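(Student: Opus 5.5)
The plan is to read off the canonical decomposition \eqref{cano} of the $2$-form $d\chii_i$ by contracting it with the fundamental vector fields. By \propref{P1} we may write
$$
d\chii_i = (d\chii_i)_{hor} + \sum_{b=1}^n \beta_b\wedge \chii_b + \sum_{1\leq a<b\leq n} c_{a,b}\, \chii_a\wedge\chii_b ,
$$
with $\beta_b$ horizontal $1$-forms and $c_{a,b}$ functions. We then set $e_i=(d\chii_i)_{hor}$, which is horizontal by construction. It remains to prove two things: every $\beta_b$ vanishes, and $c_{a,b}=\langle [u_a,u_b],u_i\rangle$. The second fact is exactly $\sum_{a<b} c_{a,b}\,\chii_a\wedge\chii_b=\delta\chii_i$ by \eqref{ded}.

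\emph{Key identity.} By Cartan's formula, $i_{X_a}d\chii_i = L_{X_a}\chii_i - d(\chii_i(X_a))$. The function $\chii_i(X_a)=\delta_{i,a}$ is constant because $\mu$ is a good metric (\propref{goodmetric}), so $i_{X_a}d\chii_i=L_{X_a}\chii_i$. Next, since $\chii_i=i_{X_i}\mu$, we get
$$
L_{X_a}\chii_i = i_{[X_a,X_i]}\mu + i_{X_i}L_{X_a}\mu = i_{[X_a,X_i]}\mu ,
$$
because $\mu$ is $G$-invariant and hence $L_{X_a}\mu=0$. Fundamental vector fields satisfy $[X_u,X_v]=\pm X_{[u,v]}$, with the sign fixed by the convention of the action. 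Therefore $L_{X_a}\chii_i=\pm\chii_{[u_a,u_i]}$ is a linear combination of the characteristic forms $\chii_1,\ldots,\chii_n$ with constant coefficients.

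\emph{The mixed terms.} Let $w$ be a vector $\mu$-orthogonal to $\mathcal D$. By construction $\chii_u(w)=\mu(X_u,w)=0$ for all $u$. Contracting the decomposition with $X_a$ and then evaluating on such a $w$ kills the horizontal part and the $\chii\wedge\chii$ terms, and leaves $\pm\beta_a(w)$. On the other hand, this evaluation equals $(L_{X_a}\chii_i)(w)=0$ by the key identity. Hence $\beta_a$ vanishes on $\mathcal E$. Being horizontal, $\beta_a$ also vanishes on $\mathcal D$, so $\beta_a=0$.

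\emph{The $\chii\wedge\chii$ terms.} As in the proof of \propref{21delta}, we compute
$$
c_{a,b}= i_{X_b}i_{X_a}d\chii_i = i_{X_b}L_{X_a}\chii_i = L_{X_a}(\chii_i(X_b)) - \chii_i([X_a,X_b]) = -\chii_i([X_a,X_b]).
$$
Using $[X_a,X_b]=\pm X_{[u_a,u_b]}$ together with \eqref{Xinv}, this equals $\mp\langle[u_a,u_b],u_i\rangle$.

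The main obstacle I expect is bookkeeping of signs. First, I must check that the fundamental vector fields of the left action $\Phi$ on $M$ satisfy $[X_u,X_v]=-X_{[u,v]}$; this is the anti-homomorphism property, which parallels $[Y_u,Y_v]=Y_{[u,v]}$ used in \propref{21delta}. Second, I must check that the sign conventions for the contraction $i_{X_b}i_{X_a}$ agree with the ordering $\chii_a\wedge\chii_b$. With these conventions, $c_{a,b}=\langle[u_a,u_b],u_i\rangle$, which matches \eqref{ded} and gives \eqref{bat}.
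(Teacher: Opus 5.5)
Your proposal is correct and is essentially the paper's proof. Both use the canonical decomposition of $d\chii_i$, compute the coefficients of $\chii_a\wedge\chii_b$ by Cartan calculus via $i_{X_b}i_{X_a}d\chii_i$ and \eqref{Xinv}, and kill the mixed terms by computing $L_{X_a}\chii_i$ in two ways (as $i_{X_a}d\chii_i$ and as $i_{[X_a,X_i]}\mu$); your evaluation on the $\mu$-orthogonal complement of $\mathcal D$ is just an explicit way of reading off the horizontal component.

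Your sign bookkeeping is also right. The identity $c_{a,b}=-\chii_i([X_a,X_b])$, combined with the standard left-action identity $[X_u,X_v]=-X_{[u,v]}$, gives exactly $+\delta\chii_i$. The paper's chain reaches the same coefficient through $-\chii_i([X_b,X_a])$ and the opposite bracket sign, two slips that cancel each other.
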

\begin{proof}
We have the canonical decomposition 
$$
d\chii_{i} = e_i + \sum_{k=1}^n \alpha_k^i  \wedge \chii_{k} + \sum_{1\leq a <b\leq n} \alpha^i_{a,b} \wedge \chii_{a} \wedge \chii_{b}.
$$
where $e_i , \alpha_k^i ,  \alpha^i_{a,b} $ are horizontal forms.

The last coefficients  can be computed as follows
\begin{align*}
 \alpha^i_{a,b} =  i_{X_b}i_{X_a} d\chi_i  =  i_{X_b}L_{X_a}  \chi_i -  \underbrace{i_{X_b} d\underbrace{ i_{X_a} \chi_i }_{constant}}_{=0} 
 =  i_{X_b}L_{X_a}  \chi_i =  \underbrace{L_{X_a} \underbrace{ i_{X_b}  \chi_i}_{constant}}_{=0} -  i_{[X_b, X_a]}  \chi_i 
 =
 - i_{[X_b, X_a]}  \chi_i  = - \chi_i([X_b, X_a]) 
   =_{\eqref{Xinv}} - \langle u_i, [u_b,u_a]\rangle =  \langle[u_a,u_b],u_i\rangle.
 \end{align*} 
 
 So,
$
\displaystyle 
d\chi_i =
e_i+ \sum_{k=1}^n \alpha_k^i  \wedge\chi_{k} +
 \sum_{1\leq a <b\leq n}  \langle[u_a,u_b],u_i\rangle \wedge \chi_{a} \wedge \chi_{b}.
$
We  now prove  that the coefficients $\alpha_k^i$  are 0. To do this, we calculate $ L_{X_{k}} \chii_{i} $, with $i,k\in\{1, \ldots,n\}$,
 in two different ways.

\begin{align*}
L_{X_{k}} \chii_{i}&= L_{X_{k}} i_{X_{i}} \mu = i_{X_{i}}\underbrace{L_{X_{k}} \mu}_0   + i_{[X_{k},X_{i} ]} \mu 
 = i_{[X_{k},X_{i}]} \mu  = \chii_{[u_k,u_i]}. \\
L_{X_{k}} \chii_{i}&= i_{X_{k}} d\chii_i  =  -\alpha_k^i  +  \sum_{1\leq k <b\leq n} \langle[u_k,u_b],u_i\rangle\ \chii_{b} -   
 \sum_{1\leq a <k\leq n}  \langle[u_a,u_k],u_i\rangle\  \chii_{a} 
 =  - \alpha_k^i  +  \sum_{1\leq c\leq n} \langle[u_k,u_c],u_i\rangle  \ \chii_{c}.
\end{align*}
This gives $\alpha_k^i=0$ for each $i,k\in\{1, \ldots,n\}$.

Finally, we obtain
$$
d\chi_i = 
e_i+ \sum_{k=1}^n \alpha_k^i  \wedge \chi_{u_k} -
 \sum_{1\leq a <b\leq n}  \langle[u_a,u_b],u_i\rangle \wedge \chi_{a} \wedge \chi_{b}
 =
 e_i -
 \sum_{1\leq a <b\leq n}  \langle[u_a,u_b],u_i\rangle \wedge \chi_{a} \wedge \chi_{b}
=_{\eqref{ded}}
 e_i - \delta \chi_i. \qedhere
$$
 \end{proof}

\begin{proposition}\label{sei}
We have
\begin{equation}\label{seib}
\delta(\chii_{i_1} \wedge \cdots \wedge \chii_{i_q})
 =    (-1)^{q}  \frac {1}  2  \sum_{\ell=1} ^n L_{X_\ell}( \chii_{i_1} \wedge \cdots \wedge \chii_{i_q} ) \wedge \chii_\ell  ,
 \end{equation}
for any $1 \leq i_1 < \cdots i_q \leq n$.
\end{proposition}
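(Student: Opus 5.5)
Both sides of \eqref{seib} are derivations (of degree $+1$) in the appropriate sense, so the plan is to check the identity on generators and then extend by the Leibniz rule. For $\delta$ this is by definition: $\delta$ is extended from \eqref{ded} to $\bigwedge^*(\chii_1,\ldots,\chii_n)$ as a graded derivation. For the right-hand side, write $D(\beta) = \sum_\ell L_{X_\ell}\beta \wedge \chii_\ell$. Each $L_{X_\ell}$ is a degree-$0$ derivation, and $\chii_\ell$ has degree one, so
\[
D(\beta\wedge\gamma) = \beta \wedge D\gamma + (-1)^{|\gamma|} D\beta\wedge\gamma .
\]
Hence $(-1)^{|\cdot|}D$ behaves as a graded derivation once the signs are arranged. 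The statement then reduces to the case $q=1$ plus a sign-bookkeeping induction on $q$.

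\textbf{Step 1 (generator case $q=1$).} I would use the formula obtained inside the proof of \propref{ddelta}:
\[
L_{X_k}\chii_i = \chii_{[u_k,u_i]} = \sum_c \langle [u_k,u_i],u_c\rangle \chii_c .
\]
Then
\[
-\tfrac12\sum_k L_{X_k}\chii_i\wedge\chii_k = -\tfrac12\sum_{k,c}\langle[u_k,u_i],u_c\rangle\,\chii_c\wedge\chii_k .
\]
By $\Ad$-invariance of $\langle-,-\rangle$ (infinitesimally, $\langle[x,y],z\rangle=-\langle y,[x,z]\rangle$), we have $\langle [u_k,u_i],u_c\rangle=\langle[u_c,u_k],u_i\rangle$. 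Antisymmetrizing the sum over pairs $(c,k)$ into $c<k$ absorbs the factor $\tfrac12$ and gives $\sum_{a<b}\langle[u_a,u_b],u_i\rangle\chii_a\wedge\chii_b$, which up to sign is $\delta\chii_i$ as in \eqref{ded}. I will be careful here: the paper's \eqref{ded} sign and the one appearing at the end of \propref{ddelta} differ, so the first check is whether the sign matches the convention for $\delta$ actually used. If a mismatch appears, I will trace it to the ordering $\chii_c\wedge\chii_k$ versus $\chii_k\wedge\chii_c$.

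\textbf{Step 2 (induction).} Assume \eqref{seib} holds for $\beta=\chii_{i_1}\wedge\cdots\wedge\chii_{i_{q-1}}$, and set $\gamma=\chii_{i_q}$. Then
\[
\delta(\beta\wedge\gamma)=\delta\beta\wedge\gamma+(-1)^{q-1}\beta\wedge\delta\gamma .
\]
On the other side, expand $L_{X_\ell}(\beta\wedge\gamma)\wedge\chii_\ell$ by Leibniz, and move $\chii_\ell$ past $\gamma$ in the first term, which produces a sign $-1$. This gives
\[
D(\beta\wedge\gamma)= -D\beta\wedge\gamma+\beta\wedge D\gamma .
\]
Multiplying by $(-1)^q\tfrac12$ and inserting the hypotheses $\delta\beta=(-1)^{q-1}\tfrac12 D\beta$ and $\delta\gamma=-\tfrac12 D\gamma$, the two expressions match term by term.

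\textbf{Main obstacle.} The key identity is the Lie-bracket/metric manipulation in Step 1. It relies on $\Ad$-invariance \eqref{InvAd}, differentiated to ad-skewness, and on the good metric \eqref{Xinv}, which is what makes $L_{X_k}\chii_i=\chii_{[u_k,u_i]}$ hold. The rest is routine sign bookkeeping, and that is where I expect the risk of error; I would verify it on a small example such as $\mathfrak{su}(2)$.
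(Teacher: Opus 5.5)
Your architecture matches the paper's: prove the case $q=1$ and extend by the Leibniz rule. Your Step 2 is correct and is essentially the paper's second computation, done inductively. The gap is the sign in Step 1. That sign is the entire content of the base case, because Step 2 merely transports whatever sign the base case has.

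Carry your computation through. With $L_{X_k}\chi_i=\chi_{[u_k,u_i]}$ and $\langle[u_k,u_i],u_c\rangle=\langle[u_c,u_k],u_i\rangle$, the summand $\langle[u_c,u_k],u_i\rangle\,\chi_c\wedge\chi_k$ is symmetric under $c\leftrightarrow k$, so
\[
-\tfrac12\sum_{k}L_{X_k}\chi_i\wedge\chi_k=-\sum_{c<k}\langle[u_c,u_k],u_i\rangle\,\chi_c\wedge\chi_k=-\delta\chi_i ,
\]
which is the opposite of \eqref{seib}. Your phrase ``absorbs the factor $\tfrac12$'' silently drops the leading minus, and the induction would then prove the negated identity for every $q$.

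The culprit is not the ordering $\chi_c\wedge\chi_k$ but the input formula. The sign that makes \eqref{bat} hold with $\delta$ as in \eqref{ded} is $[X_u,X_v]=-X_{[u,v]}$, the usual anti-homomorphism for a left action. Hence $L_{X_k}\chi_i=i_{[X_k,X_i]}\mu=\chi_{[u_i,u_k]}$. The formula $\chi_{[u_k,u_i]}$ printed in the proof of \propref{ddelta} is a sign slip: the second computation there, $\sum_c\langle[u_k,u_c],u_i\rangle\chi_c$, equals $\chi_{[u_i,u_k]}$. With the corrected formula, your antisymmetrization gives exactly $+\delta\chi_i$.

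The paper's route avoids bracket conventions altogether, and it is the safer one. Since $\chi_i(X_\ell)=\delta_{i\ell}$ is constant by \eqref{Xinv}, Cartan's formula gives $L_{X_\ell}\chi_i=i_{X_\ell}d\chi_i$. Since $e_i$ is horizontal, \eqref{bat} then gives $L_{X_\ell}\chi_i=i_{X_\ell}\delta\chi_i$. Next use the purely algebraic identity $\sum_\ell i_{X_\ell}\beta\wedge\chi_\ell=-2\beta$ for constant-coefficient $\beta\in\bigwedge^2(\chi_1,\ldots,\chi_n)$. On $\beta=\chi_a\wedge\chi_b$ the sum is $\chi_b\wedge\chi_a-\chi_a\wedge\chi_b$. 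This yields $\sum_\ell L_{X_\ell}\chi_i\wedge\chi_\ell=-2\delta\chi_i$ directly.

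This shows the $q=1$ case of \eqref{seib} is just a restatement of \eqref{bat}. It needs no further use of $\Ad$-invariance, and its sign is dictated by how $\delta\chi_i$ sits inside $d\chi_i$, not by a choice of bracket convention.
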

\begin{proof}
The case $q=1$ follows from this sequence of equalities
\begin{align*}
\sum_{\ell=1}^n L_{X_{\ell}} \chii_{i}\wedge \chi_{\ell} &=
\sum_{\ell=1}^n i_{X_{\ell}}  d \chii_{i}\wedge \chi_{\ell}=_{\eqref{bat}}
 \sum_{\ell=1}^n i_{X_{\ell}}  \delta \chii_{i}\wedge \chi_{\ell}
=_{\eqref{ded}}
\sum_{\ell=1}^n i_{X_{\ell}}   \left(\sum_{1\leq a <b\leq n} \langle [u_a,u_b],u_i\rangle \otimes \chii_{a} \wedge \chii_{b} \right) \wedge \chi_{\ell}\\
&=
\sum_{\ell=1}^n  \sum_{1\leq \ell  <b\leq n} \langle [u_\ell ,u_b],u_i\rangle \otimes \chii_{b}  \wedge \chii_{\ell}
-
\sum_{\ell=1}^n   \sum_{1\leq a <\ell \leq n} \langle [u_a,u_\ell],u_i\rangle \otimes \chii_{a}  \wedge \chii_{\ell}
\\
&=
- \sum_{\ell=1}^n  \sum_{1\leq \ell  <b\leq n} \langle [u_\ell ,u_b],u_i\rangle \otimes \chii_{\ell}  \wedge \chii_{b}
-
\sum_{\ell=1}^n   \sum_{1\leq b <\ell \leq n} \langle [u_\ell,u_b],u_i\rangle \otimes \chii_{\ell}  \wedge \chii_{b}
=
- 2\sum_{\ell=1}^n  \sum_{1\leq \ell  <b\leq n} \langle [u_\ell ,u_b],i\rangle \otimes \chii_{\ell}  \wedge \chii_{b}
=_{\eqref{ded}}
-2\delta \chii_{i},
\end{align*}
for any $i \in \{1, \ldots,n\}$. Notice the parallelism with  \eqref{dedbis}.

For  $q\geq 1$ we have 
 \begin{eqnarray*}
 \sum_{\ell=1} ^n L_{X_\ell}( \chii_{i_1} \wedge \cdots \wedge \chii_{i_q} ) \wedge \chii_\ell  
 &=&
  \sum_{\ell=1} ^n \sum_{j=1}^n   \chii_{i_1} \wedge \cdots  \wedge L_{X_\ell} \chii_{i_j} \wedge \cdots \wedge \chii_{i_q}  \wedge \chii_\ell  = -2  \sum_{j=1} ^n (-1)^{q -j} \chii_{i_1} \wedge \cdots  \wedge  \delta \chii_{i_j}  \wedge \cdots \wedge \chii_{i_q} 
   \\& =&
 2  (-1)^{q }  \sum_{j=1} ^n (-1)^{j-1} \chii_{i_1} \wedge \cdots  \wedge    \delta \chii_{i_j}  \wedge \cdots \wedge \chii_{i_q} 
=
  2 (-1)^{q} \delta(\chii_{i_1} \wedge \cdots \wedge \chii_{i_q}). \qedhere
   \end{eqnarray*} \qedhere
\end{proof}

\section{Cartan filtration}

\begin{quote}
\emph{
We define the Cartan filtration and the induced Leray-Serre-deRham  spectral sequence, whose second term is the focus of our work.
}
\end{quote}

The \emph{Cartan filtration} of $\lau{\Om}{*}{}{M}$ associated with the  action we work with is defined by
$$
\bi{F}{p}\lau{\Om}{p+q}{}{M}  = \left\{ \om \in \lau{\Om}{p+q}{}{M} \tq
i_{X_{u_0}} \cdots i_{X_{u_q}}\om= 0
\hbox{ for each family } \{ u_0, \ldots,u_q\}  \subset \g\right\} .
$$
Notice that we have
\begin{align*}
 \bi{F}{p}\lau{\Om}{p+q}{}{M}  = &\left\{ \om \in \lau{\Om}{p+q}{}{M} \tq \llbracket \omega\rrbracket  \leq q \right\} \\ =& \left\{ \sum_{1\leq i_1 < \cdots < i_q\leq n } \om_{i_1, \ldots,i_q}  \wedge \chii_{i_1} \wedge \cdots \wedge \chii_{i_q} \tq  \hbox{  $\om_\bullet$  horizontal forms}\right\}.
\end{align*}

This filtration is a decreasing one satisfying $d\bi{F}{p}\lau{\Om}{p+q}{}{M} \subset \bi{F}{p}\lau{\Om}{p+q+1}{}{M}$. The associated spectral sequence 
$\hiru E {p,q} r $
is the \emph{Leray-Serre-deRham (LSdR) spectral sequence}. It converges to the deRham cohomology $\coho H*M$ of $M$ since $\bigcup_{p\geq 0} \bi{F}{p}\lau{\Om}{*}{}{M} =\coho \Om *M$.

The case $q=0$ is special. We get
$$
\bi{F}{p}\lau{\Om}{p}{}{M} = \{ \om \in \lau{\Om}{p}{}{M} \tq
i_{X_u} \om= 0 \hbox{ with }
u\in \g
\}
= \lau\Om p {hor}  M.
$$
  This complex is not a dgca, but it is a $K$-invariant gca:
\begin{equation}\label{hor}
i_{X_u} g^*\omega = i_{g_*X_u} \omega =_{\eqref{Xg}}  i_{X_{\Ad(g) \cdot u}\circ k}\omega =0,
\end{equation}
for any horizontal form $\omega$, $g\in K$ and $u\in \g$   (cf. \eqref{Xg}).\negro 
Using the canonical decomposition \eqref{cano} of a differential form we introduce the operator  $d_{hor} \colon \coho \Omega * M \to \lau \Omega {*+1} {hor} M $ 
defined by
\begin{equation}\label{dhor}
 d_{hor}\om = (d\om)_{hor}.
 \end{equation}

 We also denote by $\coho \Om *{M/G} = \{ \om \in \coho \Om * M \mid i_{X_u} \omega = L_{X_u} \omega =0 \hbox{ for each } u\in \g \} =   \{ \om \in \lau \Om * {hor} M \mid  L_{X_u} \omega =0 \hbox{ for each } u\in \g \}$ the differential complex of \emph{basic forms} associated to the foliation determined by the action $\Phi$. The cohomology of this complex is denoted by
$\coho H *{M/G}$  and it is called the \emph{basic cohomology}.
When $G$ is compact, this cohomology is isomorphic to the singular cohomology of the orbit space $M/G$ (cf. \cite{MR958087}).

\bigskip
Another useful decomposition in this work arises from 
$$
\coho \Om{p,q} M = \left\{  \ \sum_{1\leq i_1 < \cdots < i_q\leq n } \om_{i_1, \ldots,i_q}  \otimes \chii_{i_1} \wedge \cdots \wedge \chii_{i_q}  \in \coho \Om {p+q} M \mid \om_\bullet \hbox{ horizontal form } \right\},
$$
 and
$$
\coho \Om{p,0} M = \lau \Om p {hor} M,
$$
if $p\geq 0$.
In other words,
$
\coho \Om{p,q} M = \{ \omega \in \coho \Omega {p+q} M \tq \llbracket \omega\rrbracket  =q\}$.
This gives 
\begin{equation}\label{bideg}
\bi{F}{p}\lau{\Om}{p+q}{}{M}  = \coho \Om{p+q,0} M  \oplus \cdots \oplus \coho \Om{p,q} M .
\end{equation}
Notice that, given  $\omega \in \coho \Om{p,q} M $, and since 
$
i_{X_{i_q}} \cdots i_{X_{i_1}} \om = (-1)^{pq} \om_{i_1, \ldots,i_q} ,
$
it follows that
\begin{equation*}\label{Opq}
\om = \sum_{1\leq i_1 < \cdots < i_q\leq n } (-1)^{pq} i_{X_{i_q}} \cdots i_{X_{i_1}} \om  \wedge \chii_{i_1} \wedge \cdots \wedge \chii_{i_q}.
\end{equation*}
This result gives the decomposition
\begin{equation*}\label{iden2}
\coho  \Om{p,q} M = \lau \Om p{hor} M \otimes \bi \bigwedge q(\chii_1, \ldots,\chii_n),
\end{equation*}
and
\begin{equation*}\label{iden3}
\bi{F}{p}\lau{\Om}{p+q}{}{M}  = \bigoplus_{j=0}^q \lau \Om {p+q-j} {hor} M \otimes \bi \bigwedge j(\chii_1, \ldots,\chii_n).
\end{equation*}

\section{The differential $d$}
\begin{quote}
 \emph{The Cartan filtration induces a natural bidegree on the operator $d$ we study now. It becomes simpler when we work with invariant forms.}
 \end{quote}

The differential operator $d$ on $\coho \Omega *M$ can be decomposed into three components with bidegrees $(2,-1), (1,0)$ and $(0,1)$, relative to the Cartan filtration \eqref{bideg}:
$$
d = d_{2,-1} + d_{1,0} + d_{0,1}.
$$
We will also see that this decomposition is particularly simple when the differential form is invariant.
The three components can be described as follows.

\begin{description}

\item[$\bullet$ Bidegree $(0,1)$.] The operator 
$$
d_{0,1}  \colon
 \lau \Om p {hor} M \otimes \coho \bigwedge q {\chii_1, \ldots, \chii_n}
 \TO
 \lau \Om {p} {hor} M \otimes \coho \bigwedge {q+1} {\chii_1, \ldots, \chii_n}
 $$
  is given by
\begin{equation}\label{01}
d_{0,1}    =
 (d - d_{hor} )\otimes \id    + \id \otimes \delta,
\end{equation}
 (cf. \eqref{horib}, \eqref{dhor}, \eqref{bat}, \eqref{ded}).
\item[]

\item[$\bullet$ Bidegree $(1,0)$.] The operator 
$$
d_{1,0}   \colon
 \lau \Om p {hor} M \otimes \coho \bigwedge q {\chii_1, \ldots, \chii_n}
 \TO
 \lau \Om {p+1} {hor} M \otimes \coho \bigwedge {q} {\chii_1, \ldots, \chii_n}
 $$
is given by
\begin{equation}\label{10}
 d_{1,0}  = d_{hor}  \otimes \id.
\end{equation}

\item[]

 \item [$\bullet$ Bidegree $(2,-1)$.] The operator
 $$
d_{2,-1} \colon
 \lau \Om p {hor} M \otimes \coho \bigwedge q {\chii_1, \ldots, \chii_n}
 \TO
 \lau \Om {p+2} {hor} M \otimes \coho \bigwedge {q-1} {\chii_1, \ldots, \chii_n}
 $$ is defined from 
\begin{align*}\label{D}
d_{2,-1} (\eta \wedge  \chii_{i_1}\wedge  \ldots \wedge  \chii_{i_q} )=& 
\sum_{j=1}^q  (-1)^{p+j-1} \eta \wedge e_{i_j} \wedge  (\chii_{i_1}\wedge  \ldots \wedge \widehat{\chii_{i_j}} \wedge \cdots \wedge  \chii_{i_q}) \\
=& (-1)^p \eta \wedge (d  - \delta) ( \chii_{i_1}\wedge  \ldots \wedge  \chii_{i_q}) ,
\end{align*} 
(cf. \eqref{ddelta}).
This last operator will not be necessary for calculating the second term of the LSdR spectral sequence, but it is essential for computing the $d_2$  differential of that spectral sequence.
\end{description}
 We first need the following technical lemma.

\begin{lemma}\label{azer} 
Given an invariant differential form $\omega \in  \lau \Om p{hor} M \otimes \bi \bigwedge q (\chii_1, \ldots,\chii_n)$, we have the following formula:
$$
(d\otimes \id)  \omega
= (d_{hor} \otimes \id)\omega  -   2  (\id \otimes \delta)\omega.
$$
\end{lemma}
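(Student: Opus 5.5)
The plan is to expand both sides in the canonical decomposition and compare them using the invariance of $\omega$ together with \propref{P2} and \propref{sei}. Write
$$\omega=\sum_{I}\omega_I\wedge\chii_I,$$
where $I=(i_1<\cdots<i_q)$, $\chii_I=\chii_{i_1}\wedge\cdots\wedge\chii_{i_q}$ and each $\omega_I$ is horizontal of degree $p$. By definition, $(d\otimes\id)\omega=\sum_I d\omega_I\wedge\chii_I$. Applying \propref{P2} to each horizontal coefficient gives
$$d\omega_I=d_{hor}\omega_I+(-1)^p\sum_{\ell=1}^n L_{X_\ell}\omega_I\wedge\chii_\ell .$$
Hence
$$(d\otimes\id)\omega-(d_{hor}\otimes\id)\omega=(-1)^p\sum_I\sum_{\ell}L_{X_\ell}\omega_I\wedge\chii_\ell\wedge\chii_I .$$
The statement therefore reduces to showing that this remainder equals $-2\sum_I\omega_I\wedge\delta\chii_I$.

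Next I use invariance. Since $\omega$ is $G$-invariant and $G$ is connected, $L_{X_\ell}\omega=0$ for every $\ell$. Expanding with the Leibniz rule for $L_{X_\ell}$ gives
$$\sum_I L_{X_\ell}\omega_I\wedge\chii_I=-\sum_I\omega_I\wedge L_{X_\ell}\chii_I .$$
I then wedge this on the right with $\chii_\ell$ and sum over $\ell$. For the left-hand side I commute $\chii_\ell$ past $\chii_I$, which gives $\chii_\ell\wedge\chii_I=(-1)^q\chii_I\wedge\chii_\ell$. For the right-hand side I invoke \propref{sei} (formula \eqref{seib}):
$$\sum_\ell L_{X_\ell}\chii_I\wedge\chii_\ell=2(-1)^q\,\delta\chii_I .$$
Combining the two yields
$$\sum_{I,\ell}L_{X_\ell}\omega_I\wedge\chii_\ell\wedge\chii_I=-2\sum_I\omega_I\wedge\delta\chii_I ,$$
up to an overall sign that still has to be determined.

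The main obstacle is exactly this bookkeeping of signs. The factor $(-1)^p$ from \propref{P2}, the factor $(-1)^q$ from moving $\chii_\ell$, the factor $(-1)^q$ in \eqref{seib}, and the Koszul sign implicit in letting $\id\otimes\delta$ act past a degree-$p$ horizontal factor under the identification $\eta\otimes\chii_I=\eta\wedge\chii_I$ must all cancel to give exactly $-2$. I would check the degrees carefully in each of these manipulations, fixing the convention for $\id\otimes\delta$ so that it is consistent with \eqref{01}. As a sanity check I would test the case $q=1$, $p=0$, where the identity should reduce to the $q=1$ computation already done in the proof of \propref{sei}.
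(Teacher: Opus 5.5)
Your proposal is correct and follows the paper's proof step for step: \propref{P2} on each coefficient, the invariance identity $\sum_I L_{X_\ell}\omega_I\wedge\chii_I=-\sum_I\omega_I\wedge L_{X_\ell}\chii_I$, commuting $\chii_\ell$ past $\chii_I$, and then \eqref{seib}. The sign you left open does close: your displayed intermediate identity holds exactly as written, since $(-1)^q\cdot(-1)\cdot 2(-1)^q=-2$. The remaining factor $(-1)^p$ from \propref{P2} is absorbed by the Koszul convention $(\id\otimes\delta)(\omega_I\wedge\chii_I)=(-1)^p\,\omega_I\wedge\delta\chii_I$, which is the convention the paper uses and the one consistent with \eqref{01}.
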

\begin{proof}
 Consider 
 $\displaystyle \omega  = \sum_{1\leq i_1< \cdots < i_q  \leq n}  \omega_{i_1 \ldots i_q}   \wedge \chii_{i_1} \wedge \cdots \wedge \chii_{i_q}  \in   \coho {\Om}{p,q}{M}$ an 
invariant differential form. Then
 \begin{equation}\label{bost}
 \sum_{1\leq i_1< \cdots < i_q  \leq n}  L_{X_k} \omega_{i_1 \ldots i_q}  \wedge \chii_{i_1} \wedge \cdots \wedge \chii_{i_q} +  \sum_{1\leq i_1< \cdots < i_q  \leq n}  \omega_{i_1 \ldots i_q}  \wedge L_{X_k}( \chii_{i_1} \wedge \cdots \wedge \chii_{i_q} )= 0
 \end{equation}
 for each $k\in\{1,\ldots,h\}$.
We have
 \begin{align*}
 (d\otimes \id) \om &= \sum_{1\leq i_1< \cdots < i_q  \leq n}  d \omega_{i_1 \ldots i_q}\wedge  \chii_{i_1} \wedge \cdots \wedge \chii_{i_q} 
  =_{\eqref{horib}}
  (d_{hor} \otimes \id) \ \omega +
 (-1)^{p}\sum_{1\leq i_1< \cdots < i_q  \leq n}  \sum_{\ell=1} ^n  L_{X_\ell} \omega_{i_1 \ldots i_q} \wedge \chii_\ell \wedge \chii_{i_1} \wedge \cdots \wedge \chii_{i_q} 
\\
  &=(d_{hor} \otimes \id)
   \ \omega + (-1)^{p+q} 
   \sum_{\ell=1} ^n   \left(\sum_{1\leq i_1< \cdots < i_q  \leq n}  L_{X_\ell}\omega_{i_1 \ldots i_q}  \wedge \chii_{i_1} \wedge \cdots \wedge \chii_{i_q} \right)\wedge \chii_\ell 
   \\
 &=_{\eqref{bost}} 
 (d_{hor} \otimes \id) \ \omega 
   + (-1)^{p+q+1}\sum_{\ell=1} ^n   \left(\  \sum_{1\leq i_1< \cdots < i_q  \leq n}  \omega_{i_1 \ldots i_q}  \wedge L_{X_\ell}( \chii_{i_1} \wedge \cdots \wedge \chii_{i_q} )\right)\wedge \chii_\ell  
   \\
   &=
(d_{hor} \otimes \id) \ \omega  +  (-1)^{p+q+1}  \sum_{1\leq i_1< \cdots < i_q  \leq n}  \omega_{i_1 \ldots i_q}  \wedge    \sum_{\ell=1} ^n L_{X_\ell}( \chii_{i_1} \wedge \cdots \wedge \chii_{i_q} ) \wedge \chii_\ell  
  \\
   &=_{\eqref{seib}} 
   (d_{hor} \otimes \id) \ \omega  +   2(-1)^{p+1}    \sum_{1\leq i_1< \cdots < i_q  \leq n} ^n   \omega_{i_1 \ldots i_q}
     \wedge  \delta(\chii_{i_1} \wedge \cdots \wedge \chii_{i_q}) 
     \\
   &=
(d_{hor} \otimes \id) \ \omega   -  2  \sum_{1\leq i_1< \cdots < i_q  \leq n} ^n   (\id \otimes \delta) (\omega_{i_1 \ldots i_q}
     \wedge  \chii_{i_1} \wedge \cdots \wedge \chii_{i_q} ) 
=
    (d_{hor} \otimes \id) \ \omega   -  2   (\id \otimes \delta) \omega.   \qedhere
 \end{align*}
\end{proof}

\begin{proposition}\label{47}
The differential of an invariant differential form $\omega \in  \lau \Om p{hor} M \otimes \bi \bigwedge q (\chii_1, \ldots,\chii_n)$ satisfies:
  \begin{equation}\label{471}
d \omega =   d_{2,-1}\om    \  + \ \underbrace{ (d_{hor} \otimes \id)\ \omega }_{ d_{1,0} \om }   -    \underbrace{    (\id \otimes \delta) \ \omega. }_{d_{0,1} \om}
\end{equation}
\end{proposition}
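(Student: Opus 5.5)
We prove \eqref{471} by computing $d\omega$ term by term with the Leibniz rule and regrouping the pieces according to their bidegree. By linearity it suffices to treat $\omega = \sum \omega_{i_1\ldots i_q}\wedge \chii_{i_1}\wedge\cdots\wedge\chii_{i_q}$ with horizontal coefficients $\omega_\bullet$ of degree $p$, which is the general invariant element of $\lau \Om p{hor} M \otimes \bi \bigwedge q (\chii_1, \ldots,\chii_n)$. The Leibniz rule gives
$$
d\omega = (d\otimes \id)\,\omega + (-1)^p\sum \omega_{i_1\ldots i_q}\wedge d(\chii_{i_1}\wedge\cdots\wedge\chii_{i_q}).
$$
The first term is handled by \lemref{azer}, which uses the invariance of $\omega$; it equals $(d_{hor}\otimes\id)\omega - 2(\id\otimes\delta)\omega$.

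For the second term we expand $d(\chii_{i_1}\wedge\cdots\wedge\chii_{i_q})$ with \propref{ddelta}, writing each $d\chii_{i_j}$ as $e_{i_j} + \delta\chii_{i_j}$ (in the paper's sign convention for $\delta$). The Euler-form contributions are exactly the defining expression of $d_{2,-1}\omega$: since $d_{2,-1}$ was defined through $(-1)^p\eta\wedge(d-\delta)(\chii_{i_1}\wedge\cdots\wedge\chii_{i_q})$, this identification holds by definition. The remaining contributions assemble, by the derivation property of $\delta$ on $\bigwedge^*(\chii_1,\ldots,\chii_n)$, into
$$
(-1)^p\sum \omega_{i_1\ldots i_q}\wedge\delta(\chii_{i_1}\wedge\cdots\wedge\chii_{i_q}) = (\id\otimes\delta)\,\omega,
$$
where the sign $(-1)^p$ is absorbed into the Koszul convention for $\id\otimes\delta$. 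This is the same convention used in the last lines of the proof of \lemref{azer}.

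Adding the two pieces, $-2(\id\otimes\delta)\omega + (\id\otimes\delta)\omega = -(\id\otimes\delta)\omega$. Together with the $d_{hor}\otimes\id$ term and the $d_{2,-1}$ term this gives \eqref{471}.

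To justify the underbraces we compare with \eqref{01}. For invariant $\omega$ the operator $(d-d_{hor})\otimes\id$ equals $-2(\id\otimes\delta)$ by \lemref{azer}, so $d_{0,1}\omega = -2(\id\otimes\delta)\omega + (\id\otimes\delta)\omega = -(\id\otimes\delta)\omega$. The $(1,0)$-component is $d_{hor}\otimes\id$ by \eqref{10}. Bidegree considerations then confirm that each term lands in the right component.

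The main obstacle is sign bookkeeping. We must check that the sign of $\delta$ in \propref{ddelta} (the proof ends with $e_i - \delta\chii_i$, while the statement says $e_i+\delta\chii_i$) is consistent with the conventions for $\id\otimes\delta$ and with \eqref{seib}. The net coefficient of $(\id\otimes\delta)\omega$ must come out to $-2+1=-1$ and not $-2-1$. We would settle this by tracking the case $q=1$, $p=0$ explicitly against \eqref{ded}, and then fix the convention once for all $q$ via the derivation property.
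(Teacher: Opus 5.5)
Your proposal is correct and is essentially the paper's proof. The paper simply combines \lemref{azer} with \eqref{01} and \eqref{10}, which are exactly your Leibniz expansion sorted by bidegree, and so gets $d_{0,1}\omega=-2(\id\otimes\delta)\omega+(\id\otimes\delta)\omega$.

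Your sign worry cannot produce $-2-1$. The $-2$ (through \eqref{seib}) and the $+1$ both come from the same identity \eqref{bat}, so they always combine as $-2+1$. Moreover, the printed form $d\chii_i=e_i+\delta\chii_i$ agrees with the coefficients $\alpha^i_{a,b}=\langle[u_a,u_b],u_i\rangle$ computed inside the proof of \propref{ddelta}, so the final minus sign in that proof is a slip.
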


\begin{proof} It suffices to consider  the decomposition $d= d_{2,-1}+ d_{1,0}+d_{0,1}$, along with the previous Lemma and  equalities \eqref{01} and \eqref{10}.
  \end{proof}

\section{Invariant Cartan filtration}
\begin{quote}
\emph{We verify that it is possible to work only with the invariant forms to calculate the second term of the spectral sequence of LSdR.
}
\end{quote}

We begin by studying the relationship between invariance and the canonical decomposition  \eqref{cano}.

\begin{lemma}\label{inva}
Let  $\displaystyle \omega = \om_{hor} + \sum_{\ell=1}^{q} \ \sum_{1\leq i_1 < \cdots < i_\ell\leq n } \om_{i_1, \ldots,i_\ell}  \wedge \chii_{i_1} \wedge \cdots \wedge \chii_{i_\ell}   $ be the canonical decomposition of $\omega  \in \bi{F}{p}\lau{\Om}{p+q}{}{M}$.
Then,
$$
\omega \hbox{ is $G$-invariant } \Longleftrightarrow \hbox{$\omega_{hor}$ and  each } \sum_{1\leq i_1 < \cdots < i_\ell\leq n } \om_{i_1, \ldots,i_\ell}  \wedge \chii_{i_1} \wedge \cdots \wedge \chii_{i_\ell}  \hbox{ are $G$-invariant.}
$$
\end{lemma}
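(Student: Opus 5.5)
The plan is to show that the splitting of $\om$ into its components
$$
\om^{(\ell)}=\sum_{1\leq i_1 < \cdots < i_\ell\leq n } \om_{i_1, \ldots,i_\ell}  \wedge \chii_{i_1} \wedge \cdots \wedge \chii_{i_\ell},
\qquad \om^{(0)}=\om_{hor},
$$
is preserved by the action of each $g\in G$. In other words, for every $g\in G$ and every $\ell$, we want $g^*\om^{(\ell)}$ to be exactly the $\ell$-th component of the canonical decomposition of $g^*\om$. Once this holds, both implications are immediate. For ``$\Leftarrow$'' we simply add the invariant pieces. For ``$\Rightarrow$'', if $g^*\om=\om$ then the uniqueness in \propref{P1} gives $g^*\om^{(\ell)}=(g^*\om)^{(\ell)}=\om^{(\ell)}$ for each $\ell$.

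To prove the compatibility, we fix $g\in G$ and apply $g^*$ to the decomposition. Since $g^*$ is an algebra morphism, each term becomes $g^*\om_{i_1,\ldots,i_\ell}\wedge g^*\chii_{i_1}\wedge\cdots\wedge g^*\chii_{i_\ell}$. Two facts are then needed.
\begin{enumerate}[(i)]
\item $g^*$ preserves horizontal forms. This is \eqref{hor}, and it holds for $g\in K$, so in particular for $g\in G$.
\item $g^*$ preserves the span of the characteristic forms. By \eqref{chichi} (the metric $\mu$ is $G$-invariant), $g^*\chii_{i}=\chii_{\Ad(g^{-1})\cdot u_i}$. Because $u\mapsto\chii_u$ is linear and $\Ad(g^{-1})$ is an orthogonal linear map of $\g$ by \eqref{InvAd}, this equals $\sum_k a_{ki}(g)\,\chii_k$ with constant coefficients.
\end{enumerate}
Consequently $g^*\chii_{i_1}\wedge\cdots\wedge g^*\chii_{i_\ell}$ is a constant linear combination of the monomials $\chii_{k_1}\wedge\cdots\wedge\chii_{k_\ell}$ with $k_1<\cdots<k_\ell$, the coefficients being $\ell\times\ell$ minors of $(a_{ki}(g))$. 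Regrouping shows that $g^*\om^{(\ell)}$ has the form $\sum \eta_{k_1,\ldots,k_\ell}\wedge\chii_{k_1}\wedge\cdots\wedge\chii_{k_\ell}$, where each $\eta_\bullet$ is a constant linear combination of horizontal forms and is therefore horizontal. Summing over $\ell$, the expression $\sum_\ell g^*\om^{(\ell)}$ is a decomposition of $g^*\om$ of the shape \eqref{cano}, and uniqueness in \propref{P1} identifies its $\ell$-th piece with $(g^*\om)^{(\ell)}$.

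I expect the only delicate point to be (ii): one must check that the coefficients $a_{ki}(g)$ really are constants and not functions on $M$, since otherwise regrouping could spoil horizontality. This is exactly where normality of $G$ in $K$ (so that $\Ad(g^{-1})u_i\in\g$), the $G$-invariance of the good metric, and linearity of $u\mapsto X_u$ enter. A variant of the argument uses that $i_{X_u}$ commutes appropriately with $g^*$, which gives directly that $g^*$ preserves filtration degree; however, the uniqueness route above is cleaner.
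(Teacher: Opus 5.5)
Your proof is correct. It rests on the same mechanism as the paper's: the symmetry preserves the bigrading $\Om^{*,\ell}(M)$, so uniqueness in \propref{P1} splits the invariance condition component by component. The difference is that you work at the group level, while the paper works at the Lie-algebra level.

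The paper differentiates. It starts from $L_{X_k}\om=0$ and checks two things. First, $L_{X_k}$ sends horizontal forms to horizontal forms, because $[X_\ell,X_k]$ is again a fundamental vector field. Second, $L_{X_k}\chii_i=i_{X_k}\delta\chii_i$ is a constant-coefficient combination of the $\chii_j$, via \propref{ddelta}. It then separates $\sum_\ell L_{X_k}\om^{(\ell)}=0$ by bidegree. You instead apply \eqref{hor} and \eqref{chichi} to each finite pullback $g^*$.

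Your route has two advantages:
\begin{enumerate}[(a)]
\item It never needs the computation of $d\chii_i$ in \propref{ddelta}.
\item It gives $g^*\om^{(\ell)}=\om^{(\ell)}$ directly. The paper's conclusion $L_{X_k}\om^{(\ell)}=0$ returns to $G$-invariance only through the connectedness of $G$, which the paper uses without comment.
\end{enumerate}
The paper's route, in turn, stays inside the Lie-derivative calculus used throughout. Along the way it records that each $L_{X_u}$ preserves every $\Om^{p,q}(M)$.

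A small correction to your closing commentary. For $g\in G$ you automatically have $\Ad(g^{-1})\mathfrak{g}=\mathfrak{g}$, and $g_*X_u=X_{\Ad(g)u}$ with $\Ad(g)u\in\mathfrak{g}$. So normality of $G$ in $K$ plays no role in your argument; it matters only if you want (i) for $g\in K$. Likewise, you use only the linearity of $\Ad(g^{-1})$, not its orthogonality.
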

\begin{proof}
It suffices to prove necessity.  Given $k\in \{1,\ldots,n\}$ we have:
\begin{eqnarray*}
0&=& L_{X_k} \omega = L_{X_k} \omega_{hor} + L_{X_k} \ \sum_{\ell=1}^{q} \ \sum_{1\leq i_1 < \cdots < i_\ell\leq n } \om_{i_1, \ldots,i_\ell}  \wedge \chii_{i_1} \wedge \cdots \wedge \chii_{i_\ell} 
 =
L_{X_k} \omega_{hor} + \sum_{\ell=1}^{q}  L_{X_k} \ \sum_{1\leq i_1 < \cdots < i_\ell\leq n } \om_{i_1, \ldots,i_\ell}  \wedge \chii_{i_1} \wedge \cdots \wedge \chii_{i_\ell} 
 \\
 &=& L_{X_k} \omega_{hor} +  \sum_{\ell=1}^{q} \left(  \sum_{1\leq i_1 < \cdots < i_\ell\leq n } L_{X_k} \om_{i_1, \ldots,i_\ell}  \wedge \chii_{i_1} \wedge \cdots \wedge \chii_{i_\ell} 
 +
\sum_{1\leq i_1 < \cdots < i_\ell\leq n }\om_{i_1, \ldots,i_\ell}  \wedge  L_{X_k}  (\chii_{i_1} \wedge \cdots \wedge \chii_{i_\ell} ) \right).
\end{eqnarray*}
We know that $L_{X_k} \chii_i  = i_{X_k}d\chi_i =_{\eqref{bat}}  i_{X_k}\delta\chi_i  \in_{\eqref{ded}} \bigwedge^1(\chii_1, \ldots,\chii_n)$ for each $i\in\{1,\ldots,n\}$. Also, if $\eta \in \lau \Om * {hor} M$ then $L_{X_i} \eta$ is also a horizontal form since
$$
i_{X_\ell} L_{X_i} \eta =L_{X_i} \underbrace{i_{X_\ell} \eta}_{=0}    -i_{[X_\ell,X_i]} \eta= i_{[X_{u_i},X_{u_\ell}]} \eta = - i_{X_{[u_i,u_\ell]}} \eta =0
$$
for each $\ell \in \{1, \ldots,n\}$. 
We obtain the claim based on considerations of degree.
\end{proof}
\begin{definition}
The complex of invariant differential forms of $M$ is
$$
\coho \BOm * M = \left(\coho \Om * M\right)^G = \left(\coho \Om * M\right)^K,
$$
since $G$ is dense in $K$.
The Cartan filtration induces the following filtration in $\coho \BOm * M $:
$$
\bi{F}{p}\lau{\BOm}{p+q}{}{M}  = \bi{F}{p}\lau{\Om}{p+q}{}{M}  \cap \coho{\BOm} {p+q} M.
$$
The induced invariant LSdR spectral sequence $\hiru{\BE}{p,q}{2} $ converges to the cohomology of $\coho \BOm * M $, that is, to
$\coho H  * M $. This can be proven,
for example, using \cite[Chap. 4, Sec. 1, Theorem I]{MR0336651}, along with the fact that $K$ is a connected compact Lie group.
\end{definition}

Notice that \lemref{inva} gives
$$
\bi{F}{p}\lau{\BOm}{p+q}{}{M}  = \coho \BOm{p+q,0} M  \oplus \cdots \oplus \coho \BOm{p,q} M ,
$$
where $\lau \BOm{p,q} {} M = \left( \lau \Om{p,q} {}M\right)^G = \left( \lau \Om{p,q} {}M\right)^K$.
Each differential form $\omega \in \bi{F}{p}\lau{\BOm}{p+q}{}{M}$ has a unique decomposition
\begin{equation}\label{decomp}
\omega = \omega^{p+q,0} + \cdots + \om^{p,q},
\end{equation}
where $\om^{a,b} \in  \lau \BOm{a,b} {} M $ with $a\in \{p, \ldots , p+q\}$ and $b =p+q-a$.

\smallskip

Before studying the relationship between the two spectral sequences, we need the following key Lemma, which can essentially be found in \cite[Chap. 4, Sec.1, Theorem I]{MR0336651}.

\begin{lemma}\label{61} 
There exists a differential operator
$\rho \colon \coho \Om * M \to  \coho \BOm * M $ and a homotopy operator \negro
$ \ib{H}{M}   \colon \coho \Om * M \to \coho \Om {*-1} M$ verifying the following properties:
\begin{enumerate}[i)]
\item $\om -\rho(\om) = d \ib{H}{M}  \om+ \ib{H}{M}  d\om$, for each $\om \in \coho \Om * M$.
\item    $ \ib{H}{M}  (\bi{F}{p} \lau{\Om}{p+q}{}{M} ) \subset \bi{F}{p-1} \lau{\Om}{p+q-1}{}{M}$, with $p,q\in \N$.

\item We have $\rho \left(\lau{\Om}{p,q}{}{M}  \right) \subset   \lau{\Om}{p,q}{}{M}  $.

\item   $\rho \equiv \Ide$ on $\coho \BOm * M$, $\rho \circ (1\otimes \delta) = (1\otimes \delta) \circ \rho$ and 
$g^* \circ (1\otimes \delta) = (1\otimes \delta) \circ g^*$ for any $g \in G$.
\end{enumerate}
\end{lemma}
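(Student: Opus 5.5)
We will follow the classical averaging argument of Greub–Halperin–Vanstone, using the compact connected group $K$ with its normalized Haar measure $dg$. We set
$$
\rho(\om) = \int_K g^*\om \, dg .
$$
Left invariance of $dg$ gives $h^*\rho(\om)=\rho(\om)$ for every $h\in K$, so $\rho(\om)\in\coho\BOm*M$. Since $d$ commutes with each $g^*$, we get $d\rho=\rho d$. Clearly $\rho\equiv\Ide$ on invariant forms.

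For the homotopy, we will use that $K$ is connected and compact, so $\exp\colon\mathfrak k\to K$ is surjective. We fix a smooth compactly supported $\phi\ge0$ on $\mathfrak k$ such that $\exp_*(\phi\,dv)=dg$; this can be arranged by pushing a bump measure forward and averaging, or by citing \cite[Chap. 4, Sec.1]{MR0336651}. For $v\in\mathfrak k$, let $Z_v$ be the fundamental vector field of $\Psi$. Cartan's formula along the one-parameter flow gives
$$
\exp(v)^*\om-\om=\int_0^1 \exp(tv)^*L_{Z_v}\om\,dt = dh_v\om+h_vd\om,
\qquad
h_v\om=\int_0^1\exp(tv)^*i_{Z_v}\om\,dt .
$$
We then define $\ib HM\om=-\int_{\mathfrak k}\phi(v)\,h_v\om\,dv$. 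Integrating the identity above against $\phi$ yields i).

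For ii) and iii) the key point is that $K$ preserves the Cartan structure. By \eqref{Xg} and normality of $G$ in $K$, each $g\in K$ preserves the distribution $\mathcal D$ and sends horizontal forms to horizontal forms, by \eqref{hor}. Hence $g^*$ preserves $\bi F p$. Moreover, $g^*\chii_u$ is a combination of characteristic forms plus a horizontal form: evaluate $g^*\chii_u$ on the $X_v$ using \eqref{Xg} and \eqref{Xinv}. If we choose $\mu$ to be $K$-invariant (averaging $\tau$ over $K$ and building $\mu$ as in \propref{goodmetric}, using that $\Ad(K)$ preserves $\langle-,-\rangle$ by density of $G$), then $g^*\chii_u=\chii_{\Ad(g^{-1})u}$ exactly. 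In that case $g^*$ preserves each $\lau\Om{p,q}{}M$, which gives iii) after integrating. For ii), $i_{Z_v}$ lowers the total degree by one. By the definition of $\bi Fp$ via contractions, if $\om$ is killed by $q+1$ contractions with the $X_u$, then so is $i_{Z_v}\om$ (contractions anticommute), so $i_{Z_v}\om\in\bi F{p-1}\lau\Om{p+q-1}{}M$. Applying $\exp(tv)^*$ preserves the filtration, and integration preserves it as well.

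For iv), note that $1\otimes\delta$ acts only on the $\bigwedge(\chii)$ factor through the structure constants $\langle[u_a,u_b],u\rangle$. By \eqref{chichi} and $\Ad$-invariance of the bracket and the metric, $g^*$ intertwines $\delta$: the map $u\mapsto\Ad(g^{-1})u$ is a Lie algebra automorphism and an isometry, so $\delta\chii_{\Ad(g^{-1})u}=g^*\delta\chii_u$. The same holds for $g\in K$, and integrating gives $\rho(1\otimes\delta)=(1\otimes\delta)\rho$.

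The main obstacle is ensuring that $K$ (and not just $G$) acts compatibly with the chosen good metric. This is needed so that $g^*$ preserves the bigrading and commutes with $\delta$ for all $g\in K$, which the averaging requires. We will handle it by taking $\tau$ to be $K$-invariant and using $\Ad(K)$-invariance of $\langle-,-\rangle$. A second, minor point is the existence of the density $\phi$, which we take from \cite{MR0336651}.
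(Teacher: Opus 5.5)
Your proof is correct in substance, but you build the homotopy by a different route than the paper.

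\textbf{How the routes differ.} The paper imports the Greub--Halperin--Vanstone operator $\ib{H}{M}=h_M T^*$, with $h_M=k_M-\wt{I}_\Psi\wt{h}_M\lambda^*$. This operator is assembled from fibre integrations over $M\times K\to M$ and a contraction of a neighbourhood $U\subset K$. The paper then proves ii) by checking that every piece commutes with $i_{(X_u,0)}$ and that $T_*$ sends $(X_u,0)$ to fundamental fields of $\Phi$.

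You use instead the Cartan homotopy along one-parameter subgroups, $h_v=\int_0^1\exp(tv)^*i_{Z_v}\,dt$, averaged over $\mathfrak k$. With this choice, ii) follows in one line from the anticommutation of contractions and the $K$-stability of $F^p$ given by \eqref{Xg}, which is considerably more transparent.

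For iii) and iv) you show that each $g^*$, $g\in K$, preserves $\Omega^{p,q}(M)$ and commutes with $1\otimes\delta$, using $\Ad$-equivariance of the structure constants and of $\langle-,-\rangle$. You then average. These are pointwise conditions (a subbundle and a zeroth-order operator), so averaging preserves them. This gives the exact bidegree in iii) and the commutation in iv) directly, rather than through evaluation on fundamental fields and a degree comparison in $d\rho=\rho d$.

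What the paper's route buys is that it needs nothing about $\exp$. Yours needs a compactly supported measure on $\mathfrak k$ that lifts Haar measure through $\exp$.

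\textbf{Two points to tidy.}

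First, your justification for $\phi$ does not work as written. Greub--Halperin--Vanstone provide no such density, since their construction avoids $\exp$ entirely. Averaging a pushed-forward bump over $K$ yields Haar measure as the image of $(g,v)\mapsto g\exp v$, not as the image of $\exp$.

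However, smoothness of $\phi$ is never used. The map $(t,v,x)\mapsto(\exp(tv)^*i_{Z_v}\omega)(x)$ is smooth. Hence for any compactly supported Borel probability measure $m$ on $\mathfrak k$ with $\exp_*m=dg$, the form $\int h_v\omega\,dm(v)$ is smooth and commutes with $d$, and i)--ii) go through unchanged.

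Such an $m$ exists. For a bi-invariant metric on $K$, the Riemannian exponential at $e$ coincides with the group exponential. It maps the open star-shaped region $D\subset\mathfrak k$ lying inside the tangent cut locus diffeomorphically onto $K$ minus the cut locus of $e$, which is a Haar-null set. Moreover, $D$ is bounded. So take $m=J\,dv$ on $D$ and $m=0$ outside, where $J$ is the Jacobian of $\exp$; $J$ is bounded on $D$.

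Second, you do not need to re-choose $\mu$. The fixed good metric is $G$-invariant, and $\{k\in K\mid k^*\mu=\mu\}$ is a closed subgroup containing the dense subgroup $G$. So $\mu$ is already $K$-invariant, and $g^*\chii_u=\chii_{\Ad(g^{-1})u}$ holds for every $g\in K$.
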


\begin{proof}
In this proof, for the convenience of the reader, we follow the notations from \cite[Chap. 4, Sec.1]{MR0336651}, which may differ from those used in the rest of our work. 
Note that, in particular, $\coho A*M$ denotes the complex of differential forms on $M$ (which we have denoted as $\coho\Om * M$ in this article).
Additionally, note that our action $\Psi$ is replaced by the action $T$ in this notation.
However, there is one exception: in
this proof $G$ and $K$
will remain the
same as previously
considered in this
article, while in \cite[Chap.~4, Sec.~1]{MR0336651} 
$G$ stands for a
compact Lie group,
which plays the role of our $K$.

The operator $\rho$ is defined by $
\rho(\omega) = \int_K T_a^*\omega \ da  = \int_K a^*\omega \ d a.
$
Property i) comes from \cite[Chap. 4, Sec. 1]{MR0336651}. 

	ii) For any manifold $N$, 
the operator $ \ib{H}{M} =  \ib{h}{M} T^*$  is defined as follows
	:
$$
 \ib{h}{M}  =  \ib{k}{M} - \ib{\wt{I}}{\Psi}\ib{\wt{h}}{M} \lambda^* \colon \coho A * {M\times N} \to \coho A {*-1}M.
$$
Taking $N=K$, we get the claim if we prove that, for each $\om \in \bi{F}{p} \lau{\Om}{p+q}{}{M}$ we have:
\begin{equation}\label{ii}
i_{X_{i_0}} \cdots i_{X_{i_q}} \ib{k}{M}( T^*\om)  =  \ i_{X_{i_0}} \cdots i_{X_{i_q}}  \ib{\wt{I}}{\Psi}\ib{\wt{h}}{M}( \lambda^*  T^*\om)  = 0
\end{equation}
for each $\{i_0, \ldots,i_q\} \subset \{0,\ldots,n\}$.
\smallskip

$  \rightsquigarrow$  The operator $\ib{k}{M}\colon \coho A * {M\times N} \to \coho  A {*-1} M$ uses integration 
 along
the fibers of the canonical projection
 $M \times N \mapsto M$, so
$$
i_{X_{i_0}} \cdots i_{X_{i_q}} \ib{k}{M}(\Om)=_{(a)}
\ib{\fint}{N}i_{(X_{i_0},0)} \cdots i_{(X_{i_q},0)} \Om \wedge \ib{\pi}{N}^*X = \ib{k}{M}(i_{(X_{i_0},0)} \cdots i_{(X_{i_q},0)} \Om)
$$
for each $\Om \in \coho A * {M\times N}$, where (a) is given by \cite[Chap. VII, Sec.5, Proposition X (1)]{MR0336650}.
Here, $\ib{\pi}{N}\colon M \times N \rightarrow N$ is the canonical projection.

$  \rightsquigarrow$ The operator $\ib{\wt{I}}{\Psi}\colon \coho A * {M\times U} \to \coho  A {*} M$ uses integration  along
the fibers of the canonical projection $ M\times U \mapsto M$, so
$$
i_{X_{i_0}} \cdots i_{X_{i_q}} \ib{\wt{I}}{\Psi}(\Om)=_{(a)}
\ib{\fint}{U}i_{(X_{i_0},0)} \cdots i_{(X_{i_q},0)}\Om \wedge \ib{\pi}{U}^*\Psi =  \ib{\wt{I}}{\Psi}(i_{(X_{i_0},0)} \cdots i_{(X_{i_q},0)}\Om)
$$
for each $\Om \in \coho A * {M\times U}$.
Here, $\ib{\pi}{U}\colon M \times U \rightarrow U$ is the canonical projection.

  $  \rightsquigarrow$  The operator $\ib{\wt{h}}{M}\colon \coho A * {M\times U} \to \coho  A {*-1} {M\times U} $ uses integration  along
the fibers of the canonical projection
 \footnote{In fact, $U$ is a suitable open subset of $N$.} 
 $M \times  U \times [0,1] \to M \times U$, so
\begin{align*}
i_{(X_{i_0},0)} \cdots i_{(X_{i_q},0)}\ib{\wt{h}}{M}(\Om)=_{(a)}&
\ib{\fint}{[0,1]} 
i_{(X_{i_0},0,0)} \cdots i_{(X_{i_q},0,0)}
(\id\times H)^*\Om \\
= & \ib{\fint}{[0,1]}(\id \times H)^* i_{(X_{i_0},0)} \cdots i_{(X_{i_q},0)}\Om  
= \ib{\wt{h}}{M}(i_{(X_{i_0},0)} \cdots i_{(X_{i_q},0)}\Om)
\end{align*}
for each $\Om \in \coho A* {M\times U\times [0,1]}$.

$  \rightsquigarrow$   The restriction operator $\lambda^*\colon \coho A * { M\times N} \to \coho  A {*} {M \times U}$ comes from the natural inclusion 
$M\times U \subset M \times N$.
Since $\lambda_* (X_u,0) = (X_u,0)$ then $\ib{i}{(X_u,0)}\lambda^* \Omega = \lambda^* \ib{i}{(X_u,0)}\Omega$ for each $u \in \g$ and each $\Om \in \coho A*{ M \times N}$.

\medskip

We get \eqref{ii} if we prove that $i_{(X_{i_0},0)} \cdots i_{(X_{i_q},0)} T^*\om=0$. Since $i_{X_{i_0}} \cdots i_{X_{i_q}} \om=0$ then it suffices to verify that the vector $T_* (X_u(x),0(g))$, where $u\in \g$ and $(x,g)\in M\times G$, is a tangent vector to the orbits of the action $T$.
A straightforward calculation gives
$$
T_* (X_u(x),0(g)) = X_{\Ad(g)\cdot u}(g\cdot x).
$$
The claim is proved and we get ii).

iii)  Using the canonical decomposition of \propref{P1}, it is enough to verify that $ \rho(\om )  (X_{j_0}, \ldots, X_{j_r})=0$, for $r > q$,  where $\om = \eta \wedge \chii_{i_1} \wedge \cdots \wedge \chii_{i_q}$ and $\eta$ is a horizontal form. 
By using \eqref{hor} and \eqref{Xg} we finally have
\begin{align*}
\rho(\om )  (X_{j_0}, \ldots, X_{j_r})&= \int_K a^* \om  (X_{j_0}, \ldots, X_{j_r}) da =
 \int_K    \left(a^* \eta \wedge a^* \chii_{i_1} \wedge \cdots \wedge a^* \chii_{i_q}\right) (X_{j_0}, \ldots, X_{j_r})da
 \\ &=
\pm \int_K  a^* \eta \wedge a^*  ( \chii_{i_1} \wedge \cdots \wedge \chii_{i_q})  (X_{j_0}, \ldots, X_{j_r})da =
\pm \int_K  a^* \eta \wedge ( \chii_{i_1} \wedge \cdots \wedge  \chii_{i_q})  (a_*X_{j_0}, \ldots, a_*X_{j_r})da
 = 0.  
\end{align*}
 \negro

  iv) The first property is immediate from the definition of $\rho$. Any $k\in K$ preserves the  complexes $\lau \Om * {hor}{M}$ and  $\bi \bigwedge * (\chii_1, \ldots,\chii_n)$ (cf. \eqref{hor} and \eqref{chichi}). Hence, if $\omega \in \coho \BOm {p,q} M$, then $\rho(\omega) \in \coho \BOm {p,q} M$.
Since $d\rho (\omega) = \rho (d\omega)$ we obtain $(1\otimes \delta) \rho (\omega) = \rho ((1\otimes \delta)\omega)$ for degree reasons. The last assertion follows similarly from  $g^* d = d g^*$.
\end{proof}

\begin{proposition}\label{61P}
The relationship between the two LSdR spectral sequences we have introduced is
$$
\lau{\BE}{*,*}{r}{M} \cong \lau{E}{*,*}{r}{M}. \hfill 
$$
for each $r\geq 2$.
\end{proposition}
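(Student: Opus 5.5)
The inclusion of filtered complexes $\iota\colon \coho\BOm*M \hookrightarrow \coho\Om*M$ induces a morphism of spectral sequences $\iota_r\colon \lau{\BE}{*,*}{r}{M}\to \lau{E}{*,*}{r}{M}$. It suffices to show that $\iota_1$ is an isomorphism. A morphism of spectral sequences which is an isomorphism at page $1$ is an isomorphism at every later page, because $E_{r+1}$ is the cohomology of $(E_r,d_r)$ and $\iota_r$ commutes with $d_r$. This gives the claim for all $r\geq 2$, and in fact for $r\ge 1$.

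To show that $\iota_1$ is an isomorphism, I use the operators $\rho$ and $\ib{H}{M}$ of \lemref{61}. First, $\rho$ is a chain map, being an average of pullbacks. Second, $\rho$ preserves the Cartan filtration: by iii) and the decomposition \eqref{bideg}, $\rho(\bi{F}{p}\lau{\Om}{p+q}{}{M})\subset \bi{F}{p}\lau{\BOm}{p+q}{}{M}$. Hence $\rho$ is a filtered chain map $\coho\Om*M\to\coho\BOm*M$, and $\rho\circ\iota=\Ide$ by iv). It remains to see that $\iota\circ\rho$ induces the identity on $E_1$.

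Here the homotopy $\ib{H}{M}$ enters. By i), $\Ide-\iota\rho = d\ib{H}{M}+\ib{H}{M}d$. By ii), $\ib{H}{M}$ maps $F^p$ into $F^{p-1}$, so it is only a homotopy of filtration shift $-1$, not a filtered homotopy. This is the main obstacle. The standard fact I would invoke is that a chain homotopy $h$ with $h(F^p)\subset F^{p-r+1}$ between filtered maps forces the induced maps to agree from page $r$ on. With shift $-1$, that is $r=2$, so $\iota_2\circ\rho_2=\Ide$ on $E_2$. This is exactly why the statement begins at $r\ge 2$. Together with $\rho_2\circ\iota_2=\Ide$, which holds because $\rho\iota=\Ide$ already on the level of complexes, we get that $\iota_2$ is an isomorphism, and then $\iota_r$ is an isomorphism for all $r\ge2$ by the page-by-page argument of the first paragraph.

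I would check the shift-$(r-1)$ homotopy fact directly in the case $r=2$. Take a class in $E_2^{p,q}$ represented by $\omega\in F^p$ with $d\omega\in F^{p+2}$. Then $\omega-\rho\omega = d(\ib{H}{M}\omega)+\ib{H}{M}d\omega$. Here $\ib{H}{M}d\omega\in F^{p+1}$, so it vanishes in $E_0^{p,q}$. Also $\ib{H}{M}\omega\in F^{p-1}$ with $d(\ib{H}{M}\omega)\in F^p$ up to an $F^{p+1}$ term, so $d(\ib{H}{M}\omega)$ represents a $d_1$-boundary coming from $E_1^{p-1,q}$ and hence is zero in $E_2$. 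Verifying this bookkeeping with the usual description $E_r^{p}=Z_r^p/(B_{r-1}^p+Z_{r-1}^{p+1})$ is the routine part.
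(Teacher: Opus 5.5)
Your argument is correct and is essentially the paper's own proof: Lemma~\ref{61} i)--iii) make $\rho$ a filtered retraction of the inclusion and $\ib{H}{M}$ a homotopy lowering the filtration by one, and the exercise of McCleary that the paper cites is exactly the shift-$(r-1)$ fact you invoke with $r=2$ (your direct check of it in the last paragraph is right). Drop the claim in your first paragraph that one can reduce to page $1$ and obtain the result for $r\geq 1$: you never prove it, and it fails in general, e.g.\ for the linear flow of $\R$ on $T^2$ with Liouville slope (tamed by $K=T^2$) one has $\underline{E}_1^{0,1}\cong\R$, whereas $E_1^{0,1}\cong C^\infty(T^2)/X\bigl(C^\infty(T^2)\bigr)$ is infinite dimensional.
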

\begin{proof}  
This follows from \lemref{61} i), ii) and iii)(cf.  \cite[Exercice 20, pag.78]{MR1793722}). 
 \end{proof}

  \section{Computation of the second term}
  \begin{quote}
  \emph{We provide the computation of the second term of the LSdR spectral sequence.}
  \end{quote}

Before beginning this computation, we need two remarks about the notation.

\medskip

A) The cohomology $\coho H * \g$ is isomorphic to the space of invariant forms $\left( \bigwedge^* \g^*\right)^G$ (cf. \eqref{gg}). Notice that the two following dgcas are isomorphic:
$$
\left(\bigwedge^* \g^*,d \right)  = \left(\bigwedge^* \left(\gamma_1, \ldots, \gamma_n\right),d \right) \ \ \hbox{ and } \ \ \ 
 \left(\bigwedge^* \left(\chi_1, \ldots, \chi_n\right), d\right)
 $$
via the assignment $\gamma_u \mapsto \chi_u$ (cf. \eqref{dedbis} and \eqref{ded}). Moreover, the Lie group $G$ acts on both dgcas preserving the aforementioned assignment (cf. \eqref{Rdelta} and \eqref{chichi}). Consequently, $\left(\bigwedge^* \left(\chi_1, \ldots, \chi_n\right)\right)^G$ is isomorphic to the cohomology $\coho H * \g$.

\medskip

B)  The first term of the invariant LSdR spectral sequence is
$$
   \hiru{\BE}{p,q}{1}   = \frac{    \hiru{\BZ}{p,q}{1}(M) }{  \hiru{\B}{p,q}{0}(M)  +   \hiru{\BZ}{p+1,q-1}{0}(M) },
 $$
with
  \begin{eqnarray}\label{18}
   \hiru{\BZ}{p,q}{1}(M)  &=&
    \nonumber  \left \{\eta \in \bi{F}{p}\lau{\BOm}{p+q}{}{M} \mid 	d \eta \in \bi{F}{p+1}\lau{\BOm}{p+q+1}{}{M}\right\} 
    =
    \bi{F}{p+1}\lau{\BOm}{p+q}{}{M} \oplus \left\{  \eta \in \lau{\BOm}{p,q}{}{M} \mid (\id \otimes \delta) \ \eta=0 \right\},
   \\ 
      \hiru{\B}{p,q}{0}(M)  &=& \left \{d\eta \in \bi{F}{p}\lau{\BOm}{p+q}{}{M} \mid 	\eta \in \bi{F}{p}\lau{\BOm}{p+q-1}{}{M}\right\} = d\bi{F}{p}\lau{\BOm}{p+q-1}{}{M}, \hbox{ and}
      \\  \nonumber
         \hiru{\BZ}{p+1,q-1}{0}(M)  &=& \left \{\eta \in \bi{F}{p+1}\lau{\BOm}{p+q}{}{M} \mid 	d \eta \in \bi{F}{p+1}\lau{\BOm}{p+q+1}{}{M}\right\} = \bi{F}{p+1}\lau{\BOm}{p+q}{}{M}.
         \end{eqnarray}

\begin{lemma}\label{not45}
Let  $\eta \in \coho {\BOm} {p,q} M$ with $(\id \otimes \delta) \eta =0.$
Then, there exists $\eta' \in \coho {\BOm} {p,q-1} M$ with  $$\eta -(\id \otimes \delta)\eta' \in \coho \Om p {M/G} \otimes \coho H q \g.$$
\end{lemma}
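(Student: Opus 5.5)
The plan is to reduce everything to linear algebra on the fixed finite-dimensional complex $\left(\bigwedge^*(\chii_1,\ldots,\chii_n),\delta\right)\cong\left(\bigwedge^*\g^*,d\right)$. We then apply its Hodge decomposition coefficientwise to $\eta$, treating $\eta$ as a horizontal $p$-form with values in the vector space $\bigwedge^q\g^*$. The inner product $\langle-,-\rangle$ on $\g$ induces an inner product on $\bigwedge^*(\chii_1,\ldots,\chii_n)$ for which $\{\chii_{i_1}\wedge\cdots\wedge\chii_{i_q}\}$ is orthonormal. Let $\delta^*$ be the adjoint of $\delta$, $\Delta=\delta\delta^*+\delta^*\delta$ the Laplacian, $\mathcal P$ the orthogonal projection onto $\ker\Delta$, and $\mathcal G$ the Green operator. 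These are all linear maps of a finite-dimensional space, so we have $\mathrm{Id}=\mathcal P+\delta\delta^*\mathcal G+\delta^*\delta\mathcal G$, with $\mathcal G$ commuting with $\delta$ and $\delta^*$. We extend each of them to $\lau\Om p{hor}M\otimes\bigwedge^q(\chii_1,\ldots,\chii_n)$ as $\id\otimes(-)$, acting only on the constant coefficients with respect to the basis $\chii_I$.

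Given $\eta$ with $(\id\otimes\delta)\eta=0$, we set $\eta'=(\id\otimes\delta^*\mathcal G)\eta\in\coho\Om{p,q-1}M$. The decomposition above then gives $\eta-(\id\otimes\delta)\eta'=(\id\otimes\mathcal P)\eta$. Two things remain to check: that $\eta'$ is invariant, and that $(\id\otimes\mathcal P)\eta$ lies in $\coho\Om p{M/G}\otimes\coho Hq\g$.

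For invariance, recall that $g^*$ acts on $\eta=\sum\omega_I\otimes\chii_I$ by $g^*\omega_I\otimes g^*\chii_I$. By \eqref{chichi}, its action on the $\chii$-factor is the map induced by $\Ad(g^{-1})$, which is an isometry by \eqref{InvAd} and commutes with $\delta$ (Lemma \ref{61} iv)). Hence it commutes with $\delta^*$, $\Delta$, $\mathcal G$ and $\mathcal P$. Therefore $\id\otimes\delta^*\mathcal G$ and $\id\otimes\mathcal P$ commute with each $g^*$, and both $\eta'$ and $(\id\otimes\mathcal P)\eta$ are $G$-invariant.

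The main obstacle is identifying $\ker\Delta$ with the invariant subcomplex $\left(\bigwedge^*(\chii_1,\ldots,\chii_n)\right)^G\cong\coho H*\g$ (cf. remark A and \eqref{gg}). The Lie algebra $\g$ is an ideal of the Lie algebra of the compact group $K$, since $G$ is normal in $K$, and it carries the $\Ad$-invariant inner product $\langle-,-\rangle$. So $\g$ is of compact type, and the classical argument applies. First, invariant forms are $\delta$-closed; this is the content of \eqref{gg}. Second, using $\Ad$-invariance, i.e. skew-adjointness of $\mathrm{ad}$, one checks that $\delta^*$ is, up to sign, the contraction operator built from $\mathrm{ad}$, so invariant forms are also $\delta^*$-closed. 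Hence $(\bigwedge^*)^G\subset\ker\Delta$. Both spaces are isomorphic to $\coho H*\g$, by \eqref{gg} and by finite-dimensional Hodge theory respectively, so a dimension count gives equality.

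To finish, fix a basis $h_1,\ldots,h_m$ of $\left(\bigwedge^q(\chii_1,\ldots,\chii_n)\right)^G$ and write $(\id\otimes\mathcal P)\eta=\sum_J\alpha_J\otimes h_J$ with $\alpha_J$ horizontal. The forms $h_J$ are invariant, so $L_{X_k}h_J=0$. Invariance of $(\id\otimes\mathcal P)\eta$ then gives $\sum_J L_{X_k}\alpha_J\otimes h_J=0$. Each $L_{X_k}\alpha_J$ is horizontal, as shown in the proof of \lemref{inva}, so uniqueness in \propref{P1} yields $L_{X_k}\alpha_J=0$. Thus each $\alpha_J$ is basic, and $(\id\otimes\mathcal P)\eta\in\coho\Om p{M/G}\otimes\coho Hq\g$, which is the claim.
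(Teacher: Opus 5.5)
Your proof is correct, but it obtains the invariant primitive by a different mechanism from the paper's.

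The paper argues cohomologically in the \emph{non-invariant} coefficient complex. Since $\Omega^p_{\mathrm{hor}}(M)\otimes\bigwedge^*(\chi_1,\ldots,\chi_n)$ with differential $1\otimes\delta$ has cohomology $\Omega^p_{\mathrm{hor}}(M)\otimes H^*(\mathfrak g)$, and $[\eta]$ is $G$-invariant, it obtains $\eta=\eta_0+(1\otimes\delta)\eta''$. Here $\eta_0\in\Omega^p(M/G)\otimes(\bigwedge^q(\chi_1,\ldots,\chi_n))^G$, but $\eta''$ need not be invariant. Invariance is then restored with the averaging operator $\rho$ of \lemref{61}: $\rho$ fixes $\eta$ and $\eta_0$ and commutes with $1\otimes\delta$, so $\rho(\eta'')$ serves as $\eta'$.

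You instead choose a canonical primitive $\eta'=(1\otimes\delta^*\mathcal G)\eta$. Its invariance is automatic because the finite-dimensional Hodge operators are $\Ad$-equivariant, so no integration over $K$ is needed at this step. Compactness enters only through the $\Ad$-invariant inner product and \eqref{gg}.

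What each route buys:
\begin{itemize}
\item The cost of yours is the classical computation that invariant cochains are $\delta^*$-closed, which gives $\ker\Delta=(\bigwedge^*(\chi_1,\ldots,\chi_n))^G$. This is a finite-dimensional harmonic argument, exactly the kind of tool the paper advertises avoiding. The paper sidesteps it by reusing $\rho$, which it already needs for \propref{61P}.
\item In exchange, your construction is explicit and linear in $\eta$.
\item Your last paragraph also spells out a point the paper only asserts. It shows why invariance forces basic coefficients, i.e.\ it justifies the identification $(\Omega^p_{\mathrm{hor}}(M)\otimes H^q(\mathfrak g))^G=\Omega^p(M/G)\otimes H^q(\mathfrak g)$, which in the paper tacitly uses that $G$ acts trivially on $H^q(\mathfrak g)$.
\end{itemize}

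One minor bookkeeping point: the paper's $1\otimes\delta$ carries the sign $(-1)^p$ on $\Omega^{p,q}(M)$. Either give $1\otimes\delta^*$ the same sign, or replace $\eta'$ by $(-1)^p\eta'$.
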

\begin{proof} 
Consider the cohomology class
$[\eta ] \in  \coho H q 
{ 
\lau \Om p {hor} M \otimes  \bigwedge^* \left(\chi_1, \ldots, \chi_n\right), \id \otimes \delta
}
=
\lau \Om p {hor} M\otimes \coho H q  \g
 $. 
 Notice that 
 \begin{equation}\label{kkbb}
 [\eta]\in 
 \left(
 \lau \Om p {hor} M\otimes \coho H q \g
  \right)^G 
  =
   \coho {\Om} p {M/G} \otimes \coho H q {\g} 
   = \coho \Om p {M/G} \otimes \left(\bigwedge^q \left(\chi_1, \ldots, \chi_n\right)\right)^G
 \end{equation}
  (cf. {\eqref{gg}}). This gives the decomposition
 $$
 \eta = \eta_0 + (\id \otimes \delta) \eta'',
 $$
where  $\eta_0 \in \coho \Om p {M/G} \otimes \left(\bigwedge^q \left(\chi_1, \ldots, \chi_n\right)\right)^G$  and 
$\eta'' \in  \lau  \Om {p,q-1} {} M$. Applying the operator $\rho$ of \lemref{61} iv) we get
 $$
 \eta = \rho(\eta)  = \rho(\eta_0)  + \rho (\id \otimes \delta) \eta'' =  \eta_0   + \underbrace{(\id \otimes \delta)  \rho( \eta''  )}_{\eta'}
 $$
with $\eta' \in  \coho {\BOm} {p,q-1} M$.
 \end{proof}

We have arrived at the main result of this work.

 \begin{theorem}\label{Th}
  Let $\Phi \colon G \times M \to M$ be a  locally free tame action and let $\hiru{E}{p,q}{2}$  be the second term of the associated LSdR spectral sequence. Then, the
map
    $$
  F \colon \coho H p{M/G} \otimes \coho {H} q \g \to \hiru{E}{p,q}{2}
  $$
  given by\footnote{For the calculations, we use $\coho H*{\g} =\left( \bigwedge^* (\chi_{1}, \ldots, \chi_{n})\right)^G $, as explained in  \secref{222}.}
  $$
  F ([\alpha] \otimes \beta) = \class (\alpha\otimes   \beta).
  $$
is an isomorphism, where $ \coho H p{M/G} $ denotes the basic cohomology of the foliation defined by the action, and $\coho {H} q \g$ is the cohomology of the Lie algebra $\g$ of $G$.
  \end{theorem}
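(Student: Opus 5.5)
By \propref{61P} it suffices to prove the statement for the invariant spectral sequence, so I will show that $F$ induces an isomorphism $\coho H p{M/G} \otimes \coho H q \g \cong \hiru{\BE}{p,q}{2}$.

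\emph{Step 1: the first page.} I claim $\hiru{\BE}{p,q}{1} \cong \coho \Om p {M/G}\otimes \coho H q\g$. In the description of $\hiru{\BE}{p,q}{1}$ given in B), the summand $\bi{F}{p+1}\lau{\BOm}{p+q}{}{M}$ of $\hiru{\BZ}{p,q}{1}(M)$ cancels against $\hiru{\BZ}{p+1,q-1}{0}(M)$. The quotient is therefore the $(\id\otimes\delta)$-cocycles of $\coho\BOm{p,q}M$ modulo the $(p,q)$-components of $d\eta$ with $\eta\in\bi{F}{p}\lau{\BOm}{p+q-1}{}{M}$. By \propref{47}, the $(p,q)$-component of $d\eta$ is $-(\id\otimes\delta)\eta^{p,q-1}$, where $\eta^{p,q-1}$ is taken from the decomposition \eqref{decomp}. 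Hence
\[
\hiru{\BE}{p,q}{1}\cong \frac{\ker(\id\otimes\delta)\cap\coho\BOm{p,q}M}{(\id\otimes\delta)\coho\BOm{p,q-1}M}.
\]
\lemref{not45} shows that every class has a representative in $\coho \Om p {M/G} \otimes \left(\bigwedge^q(\chi_1,\ldots,\chi_n)\right)^G$.

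For injectivity, suppose an element $\eta_0$ of that subspace equals $(\id\otimes\delta)\eta'$. Then its class in $\lau\Om p{hor}M\otimes\coho H q\g$ vanishes. Since bi-invariant forms inject into $\coho H*\g$ by \eqref{gg}, this forces $\eta_0=0$.

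\emph{Step 2: the differential $d_1$.} Take a representative $\alpha\otimes\beta$ with $\alpha$ basic and $\beta$ invariant. Since $\delta\beta=0$, \propref{47} gives
\[
d(\alpha\otimes\beta)=d_{2,-1}(\alpha\otimes\beta)+d_{hor}\alpha\otimes\beta .
\]
Because $\alpha$ is basic, \propref{P2} gives $d_{hor}\alpha=d\alpha$. The map $d_1$ is the class of the $(p+1,q)$-component, so under the identification of Step 1 it is $d_1=d\otimes\id$ on $\coho\Om *{M/G}\otimes\coho H q\g$.

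\emph{Step 3: the second page.} Taking cohomology yields $\hiru{\BE}{p,q}{2}\cong\coho H p{M/G}\otimes\coho H q\g$, using the Künneth formula over $\R$ with $\coho H q \g$ finite dimensional. Unwinding the identifications, the isomorphism is $[\alpha]\otimes\beta\mapsto\class(\alpha\otimes\beta)$, which is $F$. One must also check that $F$ is well defined: if $\alpha = d\gamma$ with $\gamma$ basic, then $\alpha\otimes\beta=d_1$ of the class of $\gamma\otimes\beta$.

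\emph{Main obstacle.} The delicate point is Step 1. There, the quotient by $\hiru{\B}{p,q}{0}$ must really reduce to $(\id\otimes\delta)$-boundaries of invariant forms. The components of $\eta$ of higher horizontal degree contribute only to $\bi F{p+1}$, which is already in the denominator. It also needs care that \lemref{not45}, together with the injectivity argument, correctly identifies the $\delta$-cohomology of the invariant complex with basic forms tensor $\coho H q \g$.
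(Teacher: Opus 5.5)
Your proposal is correct and follows essentially the same route as the paper. Both arguments identify $\hiru{\BE}{p,q}{1}$ with the $(\id\otimes\delta)$-cohomology of invariant $(p,q)$-forms, use \lemref{not45} for surjectivity, use \propref{47} to get $d_1=d\otimes\id$, and pass to $E_2$ via \propref{61P}. Your injectivity argument, that an element of $\coho \Om p {M/G}\otimes(\bigwedge^q(\chi_1,\ldots,\chi_n))^G$ which is a $\delta$-boundary must vanish because invariant forms inject into $\coho H q \g$, just spells out what the paper asserts as ``clearly a well-defined monomorphism.''
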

  \begin{proof}
  The result comes from a commutative diagram
  $$
  \xymatrix{
  \lau \Om p{} {M/G} \otimes \coho {H} q \g \ar[r]_-\cong^-F \ar[d]^{d \otimes \id} & \hiru{E}{p,q}{1}  \ar[d]^{d_1}\\  
   \lau \Om {p+1} {} {M/G} \otimes \coho {H} q \g  \ar[r]_-\cong^-F   & \hiru{E}{p+1,q}{1}.}
 $$ 
  where $d_1$ becomes
 $
 d_1(\class(\alpha \wedge \beta)) =  \class(d \alpha \wedge \beta) 
 $
 (cf. \eqref{18} and \propref{47}).
 Notice that
 $$
 \hiru{E}{p,q}{1}   = \frac{     \left\{  \eta \in \lau{\BOm}{p,q}{}{M} \mid (\id \otimes \delta) \ \eta=0 \right\}}
 {    \left\{  (\id \otimes \delta) \eta \in \lau{\BOm}{p,q}{}{M} \mid  \ \eta \in \lau{\BOm}{p,q-1}{}{M} \right\}}.
 $$
 The operator $F$ is defined as follows:
 $$
 F(\alpha \otimes \beta) = \class (\alpha \wedge \beta).
 $$
It is clearly a well-defined monomorphism and satisfies $d_1 F = F (d \otimes \id)$. It is an epimorphism by \lemref{not45}. \end{proof}

  \bibliographystyle{abbrv}

\bibliography{BaseS3}

\end{document}